\DeclareMathOperator{\Adj}{Adj}
\newcommand{\defeq}{\vcentcolon=}
\tikzstyle{white}=[circle,draw=black!100,fill=white!100,thick,inner sep=0pt,minimum size =2mm]
\tikzstyle{black}=[circle,draw=black!100,fill=black!100,thick,inner sep=0pt,minimum size =2mm]
\tikzstyle{graywhite}=[circle,draw=gray!100,fill=white!100,thick,inner sep=0pt,minimum size =2mm]
\tikzset{->-/.style={decoration={
  markings,
  mark=at position .5 with {\arrow[scale=1.3]{>}}},
  postaction={decorate}}
  }
\numberwithin{equation}{section}
\theoremstyle{plain}
\newtheorem{theorem}{{\bf Theorem}}[section]
\newtheorem{lemma}[theorem]{{\bf Lemma}}
\newtheorem{corollary}[theorem]{Corollary}
\theoremstyle{definition}
\newtheorem{definition}[theorem]{{\bf Definition}}
\theoremstyle{remark}
\newtheorem{remark}[theorem]{Remark}
\numberwithin{equation}{section}
\begin{document}

\title{The asymptotic enhanced negative type of finite ultrametric spaces}

\author{Ian Doust}
\address{School of Mathematics and Statistics, University of New South Wales, Sydney, New South Wales 2052, Australia}
\email{i.doust@unsw.edu.au}
\author{Stephen S\'{a}nchez}
\address{School of Mathematics and Statistics, University of New South Wales, Sydney, New South Wales 2052, Australia}
\email{stephen.sanchez@unsw.edu.au}
\author{Anthony Weston}
\address{Department of Mathematics and Statistics, Canisius College, Buffalo, NY 14208, USA}
\email{westona@canisius.edu}
\address{Department of Decision Sciences, University of South Africa, PO Box 392, UNISA 0003, South Africa}
\email{westoar@unisa.ac.za}

\keywords{Ultrametric space, (enhanced) negative type, proximity dendrogram}

\subjclass[2010]{05C05, 05C12, 46B85}

\begin{abstract}
Negative type inequalities arise in the study of embedding properties of metric spaces, 
but they often reduce to intractable combinatorial problems. In this paper we study more quantitative versions 
of these inequalities involving the so-called $p$-negative type gap. In particular, we focus our attention
on the class of finite ultrametric spaces which are important in areas such as phylogenetics and data mining.

Let $(X,d)$ be a given finite ultrametric space with minimum non-zero distance $\alpha$. Then the
$p$-negative type gap $\Gamma_{X}(p)$ of $(X,d)$ is positive for all $p \geq 0$.
In this paper we compute the value of the limit
\begin{eqnarray*}
\Gamma_{X}(\infty) & \defeq & \lim\limits_{p \rightarrow \infty} \frac{\Gamma_{X}(p)}{\alpha^{p}}.
\end{eqnarray*}
It turns out that this value is positive and it may be given explicitly by an elegant combinatorial formula.
On the basis of our calculations we are then able to characterize when $\Gamma_{X}(p)/ \alpha^{p}$ is constant on $[0, \infty)$.

The determination of $\Gamma_{X}(\infty)$ also leads to new, asymptotically sharp, families of enhanced $p$-negative
type inequalities for $(X,d)$. Indeed, suppose that $G \in (0, \Gamma_{X}(\infty))$. Then, for all sufficiently large $p$,
we have
\begin{eqnarray*}
\frac{G \cdot \alpha^{p}}{2} \left( \sum\limits_{k=1}^{n} |\zeta_{k}| \right)^{2}
+ \sum\limits_{j,i =1}^{n} d(z_{j},z_{i})^{p} \zeta_{j} \zeta_{i} & \leq & 0
\end{eqnarray*}
for each finite subset $\{ z_{1}, \ldots, z_{n} \} \subseteq X$
and each choice of real numbers $\zeta_{1}, \ldots, \zeta_{n}$ with $\zeta_{1} + \cdots + \zeta_{n} = 0$.
We note that these results do not extend to general finite metric spaces.
\end{abstract}

\maketitle

\section{Introduction and statement of the main results}\label{sec:1}

Since the early 1990s there has been renewed interest in embedding properties of negative type metrics.
One compelling reason for this is a fundamental link to the design of algorithms for cut problems \cite{Ar1, Cha, Ar2}.
The subject of this paper is the important and closely related class of \textit{strict} negative type metrics.
The first systematic treatment of strict negative type metrics appears in the elegant papers of Hjorth
\textit{et al}.\ \cite{Hj1, Hj2}. Informally, a metric space is of strict negative type when all of the non-trivial
negative type inequalities for the space are strict. More precisely, we have the following definition.

\begin{definition}\label{neg:type} Let $p \geq 0$ and let $(X,d)$ be a metric space.
\begin{enumerate}
\item $(X,d)$ has $p$-{\textit{negative type}} if and only if for all integers $n > 1$,
all finite subsets $\{z_{1}, \ldots , z_{n} \} \subseteq X$, and all scalar $n$-tuples
$\boldsymbol{\zeta} = (\zeta_{1}, \ldots, \zeta_{n}) \in \mathbb{R}^{n}$ that satisfy
$\zeta_{1} + \cdots + \zeta_{n} = 0$, we have:
\begin{eqnarray}\label{p:neg}
\sum\limits_{j,i =1}^{n} d(z_{j},z_{i})^{p} \zeta_{j} \zeta_{i} & \leq & 0.
\end{eqnarray}

\item $(X,d)$ has \textit{strict} $p$-{\textit{negative type}} if and only if it has $p$-negative type
and the inequalities (\ref{p:neg}) are strict except in the trivial case $\boldsymbol{\zeta} = (0, 0, \ldots, 0)$.
\end{enumerate}
\end{definition}

The first hints of the negative type conditions may be traced back to an 1841 paper of Cayley \cite{Cay}. Notably,
a metric $d$ on a finite set $X$ is of $1$-negative type if and only if $(X, \sqrt{d})$ may be isometrically
embedded into some Euclidean space. This is a well-known consequence of Theorem 1 in Schoenberg \cite{Sc1}.
Faver \textit{et al}.\ \cite{Fav} modified Schoenberg's proof to show that a metric $d$ on a finite set $X$
is of strict $1$-negative type if and only if $(X, \sqrt{d})$ may be isometrically embedded into some Euclidean
space as an affinely independent set.
By way of application it follows that every finite simple connected graph endowed
with the ordinary graph metric may be \textit{realized} as an affinely independent set in some Euclidean
space (\cite[Corollary 3.8]{Fav}). We recall that graph realization only requires the length of each edge
to be preserved by the embedding. Other distances within the graph may be distorted. In more general
settings one may simply wish to embed a finite metric space of (strict) $p$-negative type into a normed space
such as $L_{1}$ or $L_{2}$ with minimal distortion. Being able to do so in the case $p = 1$ has significant
implications for the design of approximation algorithms \cite{Ar1, Cha, Ar2}. Lately, ultrametric spaces
(or, more specifically, $k$-hierarchically well-separated trees) have figured prominently in work on embeddings
of finite metric spaces. Interesting papers along these lines include Bartal \textit{et al}.\ \cite{Ba1, Ba2}.

In this paper we examine strict $p$-negative type properties of finite ultrametric spaces in the limit as $p \rightarrow
\infty$. Importantly, a metric space is ultrametric if and only if it has strict $p$-negative type for all
$p \geq 0$ (\cite[Corollary 5.3]{Fav}). Doust and Weston \cite{Do1} introduced a way to quantify the degree of
strictness of the inequalities (\ref{p:neg}). This notion underpins the present work.

\begin{definition}\label{enh:type}
Let $(X,d)$ be a metric space that has $p$-negative type for some $p \geq 0$. Then the
\textit{$p$-negative type gap of $(X,d)$} is defined to be the largest non-negative constant $\Gamma = \Gamma_{X}(p)$
that satisfies
\begin{eqnarray*}
\frac{\Gamma}{2} \left( \sum\limits_{k=1}^{n} |\zeta_{k}| \right)^{2}
+ \sum\limits_{j,i =1}^{n} d(z_{j},z_{i})^{p} \zeta_{j} \zeta_{i} & \leq & 0
\end{eqnarray*}
for all finite subsets $\{ z_{1}, \ldots, z_{n} \} \subseteq X$
and all choices of real numbers $\zeta_{1}, \ldots, \zeta_{n}$ with $\zeta_{1} + \cdots + \zeta_{n} = 0$.
\end{definition}

In practice, it is a non-trivial exercise in combinatorial optimization to determine the precise value of $\Gamma_{X}(p)$.
This is the case even if $|X|$ is relatively small. (A specific example is discussed in Remark \ref{ian:ex}.)
Most known results deal with what are seemingly the two most tractable cases: $p = 0$ or $p = 1$. A formula for the
$1$-negative type gap of a finite metric tree was derived in \cite{Do1} using Lagrange's multiplier theorem.
The same approach was used by Weston \cite{We2} to compute $\Gamma_{X}(0)$ for each finite metric space $(X,d)$.
Notably, the formula given for $\Gamma_{X}(0)$ in \cite{We2} only depends upon $|X|$. For a finite metric space $(X,d)$
of strict $p$-negative type, Wolf \cite{Wol} has derived some general matrix-based formulas for computing $\Gamma_{X}(p)$.
An application in \cite{Wol} computes the $1$-negative type gap of each odd cycle $C_{2k+1}$, $k \geq 1$.
The formulas in \cite{Wol} also require non-trivial combinatorial optimization and depend upon being able to find
the inverse of the $p$-distance matrix of the underlying finite metric space $(X,d)$. For any given $p \geq 0$,
Li and Weston \cite{Hli} have shown that a finite metric metric space $(X,d)$ has strict $p$-negative type if and only
if $\Gamma_{X}(p) > 0$. In particular, if $(X,d)$ is ultrametric, then $\Gamma_{X}(p) > 0$ for all $p \geq 0$.
Conversely, if $\Gamma_{X}(p) > 0$ for all $p \geq 0$, then $(X,d)$ is ultrametric. These statements follow from
aforementioned results in \cite{Fav} and \cite{Hli}. At this point it is worth recalling the formal definition of an ultrametric.

\begin{definition}\label{ultra}
A metric $d$ on a set $X$ is said to be \textit{ultrametric} if for all $x,y,z \in X$, we have:
\begin{eqnarray*}
d(x,y) & \leq & \max \{ d(x,z), d(y,z) \}.
\end{eqnarray*}
\end{definition}

De Groot \cite{deG} characterized ultrametric spaces up to homeomorphism as the strongly
zero-dimensional metric spaces. In fact, ultrametric spaces arise in very specific ways as the end spaces
or leaves of certain tree-like structures, and it is for this reason that they are exceptionally
important in fields as diverse as computational logic \cite{Mur}, data analysis \cite{Car}, non-commutative geometry
\cite{Hu2}, and quantum mechanics \cite{Koz}. Hughes \cite{Hu1} has given a categorical equivalence
between the end spaces of infinite trees and complete ultrametric spaces. More precisely;
\cite[Theorem 6.9]{Hu1} states that there is an equivalence from the category of geodesically complete,
rooted $\mathbb{R}$-trees and equivalence classes of isometries at infinity, to the category of
complete ultrametric spaces of finite diameter and local similarity equivalences. 

An important class of ultrametric spaces arise as dendrograms. Loosely speaking, a dendrogram is a nested family
of partitions of a set that is usually represented graphically as a rooted tree. Dendrograms appear in many applications,
particularly in the data mining technique of hierarchical clustering. Dendrograms come in two main flavors:
\textit{proximity} and \textit{threshold}. Proximity dendrograms and finite ultrametric spaces are equivalent.
Indeed, given a finite set $X$ with at least two points, there is a natural bijection between the collection of all
proximity dendrograms over $X$ and the collection of all ultrametrics on $X$. A nice account of this well-known
equivalence (which will be central in our arguments below) is given by Carlsson and M\'{e}moli \cite[Theorem 9]{Car}.

Now suppose that $(X,d)$ is a finite ultrametric space with minimum non-zero distance $\alpha$.
As noted, $\Gamma_{X}(p) > 0$ for all $p \geq 0$. In this paper we examine the limiting
behavior of the ratio $\Gamma_{X}(p) / \alpha^{p}$ as $p \rightarrow \infty$. It turns out that
there is an intriguing pattern. Theorem \ref{main} shows that the limit
\begin{eqnarray*}
\Gamma_{X}(\infty) & \defeq & \lim\limits_{p \rightarrow \infty} \frac{\Gamma_{X}(p)}{\alpha^{p}}
\end{eqnarray*}
exists and is necessarily finite. Indeed, we derive an explicit combinatorial formula for $\Gamma_{X}(\infty)$.
Before stating this formula we need to introduce some additional notation and terminology. For each integer
$n \geq 2$, we set
\begin{eqnarray}\label{theta}
\vartheta (n) & \defeq & \frac{1}{2} \left( \left\lfloor \frac{n}{2} \right\rfloor^{-1}
+ \left\lceil \frac{n}{2} \right\rceil^{-1} \right).
\end{eqnarray}

The next notion has a simple interpretation in terms of proximity dendrograms. (See Remark \ref{cot:rem}.)

\begin{definition}\label{cots}
Let $(X,d)$ be a finite ultrametric space with minimum non-zero distance $\alpha$. Let $z \in X$ be given.
The closed ball $B_{z}(\alpha) = \{ x \in X : d(x,z) \leq \alpha \}$ will be called a \textit{coterie in} $X$
if $|B_{z}(\alpha)| > 1$.
\end{definition}

It is a simple matter to verify the following statements about coteries. If $(X,d)$ is a finite ultrametric space
with at least two points, then $X$ contains at least one coterie. Moreover, if $B_{1}$ and $B_{2}$ are coteries
in $X$, then $B_{1} = B_{2}$ or $B_{1} \cap B_{2} = \emptyset$. We are now in a position to formulate
the statements of our main results.

\begin{theorem}\label{main}
Let $(X,d)$ be a finite ultrametric space with at least two points and minimum non-zero distance $\alpha$.
If $B_{1}, B_{2}, \ldots, B_{l}$ are the distinct coteries of $(X,d)$, then
\begin{eqnarray}\label{limit}
\lim\limits_{p \rightarrow \infty} \frac{\Gamma_{X}(p)}{\alpha^{p}} & = &
\left\{ \vartheta (|B_{1}|)^{-1} + \cdots + \vartheta (|B_{l}|)^{-1} \right\}^{-1}.
\end{eqnarray}
\end{theorem}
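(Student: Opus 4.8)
The plan is to reduce the limit computation to a finite-dimensional optimization problem localized on the coteries, using the proximity-dendrogram structure of $(X,d)$. First I would recall from Wolf \cite{Wol} (or derive directly) that for fixed $p$, the quantity $\Gamma_X(p)$ is the reciprocal of a maximum of the form $\max\{ \mathbf{1}^T A_p^{-1} \mathbf{1} \}$ over suitable sign patterns, where $A_p$ is the $p$-distance matrix; more convenient here is the variational description $\Gamma_X(p) = \bigl(\sup \langle A_p^{-1}\mathbf{u}, \mathbf{u}\rangle / \|\mathbf{u}\|_1^2\bigr)^{-1}$ with the supremum over vectors $\mathbf{u}$ in the range of $A_p$ restricted to the hyperplane $\sum u_k = 0$, suitably interpreted. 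The key structural input is that after dividing distances by $\alpha$, every nonzero distance is $\ge 1$, and distances strictly greater than $\alpha$ blow up like $(\text{ratio})^p \to \infty$ as $p\to\infty$. Thus in the limit $A_p/\alpha^p$ degenerates: entries corresponding to pairs of points lying in a common coterie tend to $1$ (or $0$ on the diagonal), while entries for pairs in distinct coteries tend to $+\infty$.

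Next I would make this degeneration precise. Write $X = B_1 \sqcup \cdots \sqcup B_l \sqcup S$, where $S$ is the set of points not lying in any coterie (singletons at the finest level). The heuristic is that to make $\sum d(z_j,z_i)^p \zeta_j \zeta_i$ as large (least negative) as possible relative to $(\sum|\zeta_k|)^2$, the mass $\boldsymbol\zeta$ should be confined to a single coterie $B_m$ for whichever $m$ gives the best constant — spreading mass across two coteries, or onto $S$, incurs the huge cross-distance penalty $\sim (\text{something})^p$ which forces the inequality to hold with enormous slack, hence does not govern the limiting gap. Within a single coterie $B_m$ all pairwise distances equal $\alpha$, so the form becomes $\alpha^p\bigl(\sum_{j\ne i}\zeta_j\zeta_i\bigr) = \alpha^p\bigl((\sum\zeta_k)^2 - \sum\zeta_k^2\bigr) = -\alpha^p\sum\zeta_k^2$ on the hyperplane. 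Hence on $B_m$ alone the best constant $G$ satisfies $G\,(\sum|\zeta_k|)^2 \le 2\alpha^p\sum\zeta_k^2$, and optimizing $\sum\zeta_k^2/(\sum|\zeta_k|)^2$ over $\boldsymbol\zeta \in \mathbb{R}^{|B_m|}$ with $\sum\zeta_k=0$ gives exactly $\alpha^p/\vartheta(|B_m|)$ — this is the elementary extremal computation (balance $\lfloor |B_m|/2\rfloor$ coordinates against $\lceil |B_m|/2\rceil$ coordinates with appropriate signs) that produces the function $\vartheta$ in \eqref{theta}. So each coterie $B_m$ individually forces $\Gamma_X(p)/\alpha^p \le \vartheta(|B_m|)^{-1} + o(1)$... but that is only an upper bound from one coterie, and in fact one can do better by using several coteries \emph{simultaneously}, which is why the reciprocals add.

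The real content, then, is the following dichotomy, which I would prove by an explicit construction plus a matching estimate. \emph{Lower bound:} given the claimed value $\Gamma^* = (\sum_m \vartheta(|B_m|)^{-1})^{-1}$, I construct, for each $m$, an optimal zero-sum vector $\boldsymbol\zeta^{(m)}$ supported on $B_m$ with $\langle A_p \boldsymbol\zeta^{(m)}, \boldsymbol\zeta^{(m)}\rangle = -\alpha^p\|\boldsymbol\zeta^{(m)}\|_2^2$ and $\|\boldsymbol\zeta^{(m)}\|_2^2/\|\boldsymbol\zeta^{(m)}\|_1^2 = \vartheta(|B_m|)^{-1}/2$; then I superpose scaled copies, $\boldsymbol\zeta = \sum_m c_m \boldsymbol\zeta^{(m)}$, choosing the weights $c_m$ so that the cross terms between distinct coteries — which carry a factor $d(\cdot,\cdot)^p \ge (\text{second-smallest distance})^p$ — are dominated, as $p\to\infty$, by... no: the cross terms are \emph{positive and large}, so superposition seems bad. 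The resolution is that the gap is a $\le$ statement, so having large positive cross terms is fine for feasibility of larger $G$; what we need is that the single-coterie test vectors already defeat any $G > \Gamma^*$. I expect the correct argument instead bounds $\Gamma_X(p)/\alpha^p$ \emph{below} by exhibiting, for $G < \Gamma^*$, that the inequality holds for all $\boldsymbol\zeta$: decompose an arbitrary zero-sum $\boldsymbol\zeta$ according to the coterie partition, bound the within-coterie contributions using the single-coterie estimates, and show the between-coterie contributions are negative enough — here one uses that the block structure of $A_p/\alpha^p$ converges to a matrix which, on each coterie block, is $J - I$ and, crucially, has the right inertia so that the quadratic form's value is controlled by $-\sum_m \|\boldsymbol\zeta|_{B_m}\|_2^2$ plus negligible corrections, and then Cauchy–Schwarz across the $l$ blocks combined with the definition of $\vartheta$ yields the harmonic-type sum. \emph{Upper bound:} conversely, a carefully weighted combination of the $l$ single-coterie optimal vectors (weights proportional to $\vartheta(|B_m|)$) realizes the constant $\Gamma^* + o(1)$ in the limit, because for \emph{that} particular combination the cross terms, though present, contribute at a lower order once the $\ell^1$ normalization is taken into account — this is where proving that the $o(1)$ error genuinely vanishes, rather than merely being bounded, will be the main obstacle, and it is exactly here that one must invoke the fact (from the coterie disjointness property and the ultrametric inequality) that any two distinct coteries are separated by a distance \emph{strictly} exceeding $\alpha$, so the ratio of that distance to $\alpha$ is $> 1$ and its $p$-th power blows up, forcing the optimal $\boldsymbol\zeta$ to asymptotically decouple across coteries. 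I would organize the write-up as: (i) the single-coterie extremal lemma computing $\vartheta$; (ii) a lemma that the limit exists and equals the $\sup\inf$ of the localized problems; (iii) the matching two-sided estimate using the dendrogram decomposition; the obstacle is controlling the cross-coterie error terms uniformly in the (high-dimensional, sign-constrained) optimization as $p\to\infty$, which I would handle by a compactness argument on the normalized simplex $\{\|\boldsymbol\zeta\|_1 = 1\}$ together with the observation that any limit point of optimizers must be supported within a single coterie.
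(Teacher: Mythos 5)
Your overall plan (normalize by $\alpha$, localize to coteries, compute the single-coterie extremum that produces $\vartheta$, use compactness on the normalized simplex) is pointed in the right direction, but the step you flag as ``the main obstacle'' is precisely the idea the argument cannot do without, and your fallback heuristics for it are wrong. Any two distinct coteries $B_m,B_{m'}$ are separated by a \emph{constant} distance $\beta_{m,m'}>\alpha$ (every $x\in B_m$, $y\in B_{m'}$ first meet in the same block of the dendrogram), so the cross-coterie contribution of $\boldsymbol{\zeta}$ to $\sum d(z_j,z_i)^p\zeta_j\zeta_i$ factors as $\beta_{m,m'}^{p}\bigl(\sum_{z_j\in B_m}\zeta_j\bigr)\bigl(\sum_{z_i\in B_{m'}}\zeta_i\bigr)$, which vanishes identically for every $p$ as soon as $\boldsymbol{\zeta}$ sums to zero on each coterie and carries no mass outside the coteries. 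These are exactly the paper's ``flat'' (simplicially balanced) simplices: for them the superposition $\sum_m w_m\boldsymbol{\zeta}^{(m)}$ of per-coterie zero-sum optimizers has \emph{zero} cross terms, its gap equals $\sum_m w_m^2\vartheta(|B_m|)$ exactly, and minimizing over $w_1+\cdots+w_l=1$ by Lagrange multipliers yields the harmonic sum. So there is no $o(1)$ error to control in the upper bound. Relatedly, two of your asserted facts are incorrect: a single coterie forces $\Gamma_X(p)/\alpha^p\le\vartheta(|B_m|)$, not $\vartheta(|B_m|)^{-1}$; and limit points of optimizers are \emph{not} supported in a single coterie --- they are flat simplices spread over all coteries with weights proportional to $\vartheta(|B_m|)^{-1}$, which is exactly why the reciprocals add (concentration on one coterie would give $\min_m\vartheta(|B_m|)$ instead).

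The lower bound and the existence of the limit also need repair. The statement that $A_p/\alpha^p$ ``converges to a matrix'' with the right inertia is false: the off-coterie entries diverge, so there is no limiting quadratic form to appeal to, and the Cauchy--Schwarz-across-blocks step is not worked out and does not obviously produce a harmonic sum. The paper's substitute is combinatorial: write $\gamma_p(\boldsymbol{\omega})=c_1\alpha_1^p+\cdots+c_\ell\alpha_\ell^p$ with coefficients expressed through block sums over the dendrogram, prove that every tail sum $c_k+\cdots+c_\ell\ge 0$ by a telescoping cancellation along the tree, and conclude by Abel summation that each $\gamma_p(\boldsymbol{\omega})$ is non-decreasing in $p$ (strictly increasing to $\infty$ unless $\boldsymbol{\omega}$ is flat). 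This gives monotonicity and boundedness of $\Gamma_X(p)$ (hence existence of the limit), and it is also what makes the compactness argument close: a subsequential limit of optimizers must be flat, since otherwise $\gamma_p$ blows up along it, contradicting $\Gamma_X(p)\le 1$; the same monotonicity supplies the uniformity needed to identify $\Gamma_X(\infty)$ with $\inf_{\boldsymbol{\omega}\ \mathrm{flat}}\gamma_1(\boldsymbol{\omega})$. You would need to supply an equivalent of this monotonicity-plus-cancellation mechanism before your outline becomes a proof.
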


As noted, the left side of (\ref{limit}) will be denoted $\Gamma_{X}(\infty)$. We call $\Gamma_{X}(\infty)$
the \textit{asymptotic negative type constant of $(X,d)$}. In proving Theorem \ref{main}, we will
see that the ratio $\Gamma_{X}(p)/ \alpha^{p}$ is non-decreasing on $[0, \infty)$.
(We note in Remark \ref{rem:1} that no such statement holds for finite metric spaces that are not ultrametric.)
Thus, by definition of $\Gamma_{X}(p)$, we obtain the following automatic corollary of Theorem \ref{main}.

\begin{corollary}\label{main:cor}
Let $(X,d)$ be a finite ultrametric space with at least two points and minimum non-zero distance $\alpha$.
Let $G \in (0, \Gamma_{X}(\infty))$ be given. Then, for all sufficiently large $p$, we have
\begin{eqnarray*}
\frac{G \cdot \alpha^{p}}{2} \left( \sum\limits_{k=1}^{n} |\zeta_{k}| \right)^{2}
+ \sum\limits_{j,i =1}^{n} d(z_{j},z_{i})^{p} \zeta_{j} \zeta_{i} & \leq & 0
\end{eqnarray*}
for each finite subset $\{ z_{1}, \ldots, z_{n} \} \subseteq X$
and each choice of real numbers $\zeta_{1}, \ldots, \zeta_{n}$ with $\zeta_{1} + \cdots + \zeta_{n} = 0$.
\end{corollary}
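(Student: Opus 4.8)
The plan is to deduce Corollary \ref{main:cor} directly from Theorem \ref{main} together with a monotonicity property of the normalized gap that will have been established in the course of proving that theorem. The first thing I would record is a trivial but key monotonicity of the enhanced inequality in its constant: for any fixed finite subset $\{z_{1},\dots,z_{n}\} \subseteq X$ and any real tuple $\boldsymbol{\zeta} = (\zeta_{1},\dots,\zeta_{n})$ with $\zeta_{1} + \cdots + \zeta_{n} = 0$, the quantity $(\sum_{k=1}^{n}|\zeta_{k}|)^{2}$ is non-negative; hence if the enhanced inequality of Definition \ref{enh:type} holds with some constant $\Gamma \geq 0$, then it also holds with every constant $G'$ satisfying $0 \leq G' \leq \Gamma$. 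In particular, the enhanced $p$-negative type inequality holds with the constant $G\cdot\alpha^{p}$ as soon as $G\cdot\alpha^{p} \leq \Gamma_{X}(p)$, i.e.\ as soon as $G \leq \Gamma_{X}(p)/\alpha^{p}$.

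Next I would invoke the two facts supplied by Theorem \ref{main} and its proof: that $\Gamma_{X}(p)/\alpha^{p} \to \Gamma_{X}(\infty)$ as $p \to \infty$, with the limit given explicitly by \eqref{limit}, and that $p \mapsto \Gamma_{X}(p)/\alpha^{p}$ is non-decreasing on $[0,\infty)$. Given $G \in (0, \Gamma_{X}(\infty))$, put $\varepsilon = \Gamma_{X}(\infty) - G > 0$. By the convergence there is a $p_{0}$ such that $\Gamma_{X}(p)/\alpha^{p} > \Gamma_{X}(\infty) - \varepsilon = G$ for all $p \geq p_{0}$ (the monotonicity makes this threshold behave well, but is not strictly needed). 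Combining this with the first step, for every $p \geq p_{0}$ the enhanced inequality holds with constant $G\cdot\alpha^{p}$, for each finite subset of $X$ and each real tuple summing to zero, which is precisely the assertion of the corollary.

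I do not expect any genuine obstacle here: the entire substance of the corollary is carried by Theorem \ref{main}, and the deduction is a two-line comparison argument. The only point that needs a little care is to ensure that the relevant convergence (and, ideally, monotonicity) statement has actually been extracted from the proof of Theorem \ref{main} before this corollary is invoked; if the monotonicity were for some reason unavailable, the bare convergence $\Gamma_{X}(p)/\alpha^{p} \to \Gamma_{X}(\infty)$ would still suffice to produce $p_{0}$. I would also note, for contrast with Remark \ref{rem:1}, that this clean asymptotic behavior genuinely relies on the ultrametric hypothesis, since for general finite metric spaces neither the positivity of the limit nor the monotonicity of $p \mapsto \Gamma_{X}(p)/\alpha^{p}$ need hold.
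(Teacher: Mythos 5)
Your argument is correct and coincides with the paper's: the authors likewise treat the corollary as automatic from the monotonicity of $p \mapsto \Gamma_{X}(p)/\alpha^{p}$ (established en route to Theorem \ref{main}), the convergence of this ratio to $\Gamma_{X}(\infty)$, and the defining property of $\Gamma_{X}(p)$ as the largest admissible constant. No gaps.
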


On the basis of Theorem \ref{main} and existing results in the literature, it is also possible to
determine when $\Gamma_{X}(p)/ \alpha^{p}$ is constant on $[0, \infty)$. The trivial case is the discrete metric.
If the ultrametric $d$ on the set $X$ is given by $d = \alpha \cdot \rho$ where $\rho$ is the discrete metric on $X$,
then it follows from Weston \cite[Theorem 3.2]{We2} (and (\ref{Y}) below) that $\Gamma_{X}(p)/ \alpha^{p}$ is
constant on $[0, \infty)$. We complete the classification of all such ultrametrics by deriving the following theorem.

\begin{theorem}\label{main2}
Let $(X,d)$ be a finite ultrametric space with at least two points and minimum non-zero distance $\alpha$.
Assume that $d \not= \alpha \cdot \rho$ where $\rho$ denotes the discrete metric on $X$.
If $B_{1}, B_{2}, \ldots, B_{l}$ are the distinct coteries of $(X,d)$, then the following conditions are equivalent:
\begin{enumerate}
\item $\Gamma_{X}(0) = \Gamma_{X}(\infty)$

\item $\Gamma_{X}(p)/ \alpha^{p}$ is constant on $[0, \infty)$

\item $|X| = |B_{1}| + \cdots |B_{l}|$ and all of the integers $|B_{1}|, \ldots, |B_{l}|$ are even.
\end{enumerate}
\end{theorem}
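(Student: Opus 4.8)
The plan is to deduce (1)$\iff$(2) from the monotonicity of $p \mapsto \Gamma_X(p)/\alpha^p$ established alongside Theorem~\ref{main}, and to prove (1)$\iff$(3) by comparing the two closed forms for $\Gamma_X(0)$ and $\Gamma_X(\infty)$. Two inputs are needed. From Theorem~\ref{main} and its proof, $\Gamma_X(\infty) = \{\vartheta(|B_1|)^{-1} + \cdots + \vartheta(|B_l|)^{-1}\}^{-1}$, and the function $p \mapsto \Gamma_X(p)/\alpha^p$ is non-decreasing on $[0,\infty)$. From Weston~\cite[Theorem~3.2]{We2} (see also~(\ref{Y})), $\Gamma_X(0) = \vartheta(|X|)$. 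Since $\alpha^0 = 1$, (2) trivially implies (1), and conversely a non-decreasing function on $[0,\infty)$ whose value at $0$ equals its limit at infinity (which is its supremum) is constant, so (1) implies (2). It therefore remains to prove (1)$\iff$(3).

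Taking reciprocals in $\Gamma_X(0) = \Gamma_X(\infty)$, we see that (1) holds if and only if $\vartheta(|X|)^{-1} = \vartheta(|B_1|)^{-1} + \cdots + \vartheta(|B_l|)^{-1}$; call this identity $(\star)$. From~(\ref{theta}) one computes, for an integer $n \geq 2$, that $\vartheta(n)^{-1} = n/2$ when $n$ is even and $\vartheta(n)^{-1} = n/2 - 1/(2n)$ when $n$ is odd; in particular $\vartheta(n)^{-1} \leq n/2$, with equality exactly when $n$ is even. I would also use the structural facts recorded just before Theorem~\ref{main}: distinct coteries are pairwise disjoint subsets of $X$, each of cardinality at least $2$. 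Writing $N = |X|$ and $S = |B_1| + \cdots + |B_l|$, these give $S \leq N$, and $|B_i| \leq N - 2$ for every $i$ whenever $l \geq 2$. Then (3)$\Rightarrow(\star)$ is immediate: if $S = N$ and every $|B_i|$ is even, then $N$ is even and $\sum_{i} \vartheta(|B_i|)^{-1} = \sum_{i} |B_i|/2 = N/2 = \vartheta(N)^{-1}$.

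For $(\star)\Rightarrow$(3) I would split on the parity of $N$. If $N$ is even, then $\vartheta(N)^{-1} = N/2$ while $\sum_{i} \vartheta(|B_i|)^{-1} \leq S/2 \leq N/2$; hence $(\star)$ forces both $S = N$ and $\vartheta(|B_i|)^{-1} = |B_i|/2$ for every $i$, i.e.\ $S = N$ and every $|B_i|$ even, which is~(3). If $N$ is odd, I would show $(\star)$ cannot hold; since~(3) forces $N$ even, this shows that under~(1) we are necessarily in the even case and~(3) follows. When $S < N$ one has $\sum_{i} \vartheta(|B_i|)^{-1} \leq (N-1)/2 < (N^2-1)/(2N) = \vartheta(N)^{-1}$, contradicting $(\star)$. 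When $S = N$, the case $l = 1$ is impossible (it would force $B_1 = X$, i.e.\ $d = \alpha\cdot\rho$, contrary to hypothesis), so $l \geq 2$ and $|B_i| \leq N - 2$ for all $i$; since $N$ is odd, at least one $|B_i|$ is odd, and therefore
\begin{eqnarray*}
\sum_{i=1}^{l}\vartheta(|B_i|)^{-1}
 & = & \frac{N}{2} - \frac{1}{2}\sum_{i\,:\,|B_i|\ \text{odd}} \frac{1}{|B_i|} \\
 & \leq & \frac{N}{2} - \frac{1}{2(N-2)} \;<\; \frac{N}{2} - \frac{1}{2N} \;=\; \vartheta(N)^{-1},
\end{eqnarray*}
again contradicting $(\star)$. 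This establishes (1)$\iff$(3) and completes the proof.

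The step I expect to be the real obstacle is the odd sub-case $S = N$: one must observe that a family of coteries that leaves no point of $X$ uncovered and whose sizes sum to an odd number must contain an odd coterie, that such a coterie has size at most $N - 2$ once $l \geq 2$, and that its deficit $1/(2|B_i|)$ strictly exceeds the deficit $1/(2N)$ appearing in $\vartheta(N)^{-1}$. It is precisely here that the standing hypothesis $d \neq \alpha\cdot\rho$ is essential --- without it, the discrete metric on an odd-sized set satisfies~(1) (indeed~(2)) but not~(3). Everything else reduces to routine bookkeeping with the definition~(\ref{theta}) of $\vartheta$ and the disjointness of coteries.
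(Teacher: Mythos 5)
Your proof is correct and follows essentially the same route as the paper: (1)$\iff$(2) via the monotonicity of $\Gamma_X(p)/\alpha^p$, and (1)$\iff$(3) by writing $\vartheta(n)^{-1}$ as $n/2$ minus a parity-dependent deficit and comparing, with the hypothesis $d \neq \alpha\cdot\rho$ ruling out the single-coterie case. The only cosmetic difference is that the paper packages the arithmetic as a standalone inequality (its Corollary 6.1, with the equality analysis) and treats the case $\sum_i |B_i| < |X|$ separately via the strict monotonicity of $\vartheta$, whereas you fold everything into one direct case analysis.
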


A number of basic conventions are used in this paper. $\mathbb{N}$ denotes the set of positive integers and
is referred to as the set of natural numbers. Given $n \in \mathbb{N}$, we use $[n]$ to denote the
segment of the first $n$ natural numbers: $\{ 1, 2, \ldots, n \}$. Sums indexed over the empty set are always
taken to be $0$. In relation to Definition \ref{neg:type} in the case $p = 0$ (and elsewhere), we define $0^{0} = 0$.

The rest of the paper is organized as follows. Section \ref{sec:sim} discusses
normalized simplices and a technique for computing $\Gamma_{X}(p)$.
Section \ref{sec:den} recalls the natural bijection between proximity dendrograms and finite ultrametric spaces.
Section \ref{sec:4} develops a combinatorial framework to underpin our proof of Theorem \ref{main}.
Section \ref{sec:5} then uses analytical techniques to complete the derivation of Theorem \ref{main}.
In Section \ref{sec:6} we conclude the paper by proving Theorem \ref{main2}.

\section{Normalized simplices and the calculation of $\Gamma_{X}(p)$}\label{sec:sim}

In this section we recall a method for computing the $p$-negative type gap $\Gamma_{X}(p)$ of a metric space $(X,d)$.
The technique is based upon the notion of normalized simplices.

\begin{definition}\label{S1}
Let $X$ be a set and suppose that $s,t > 0$ are integers.
A \textit{normalized $(s,t)$-simplex in $X$} is a collection of pairwise distinct points
$x_{1}, \ldots, x_{s}, y_{1}, \ldots, y_{t} \in X$ together with a corresponding collection
of positive real numbers $m_{1}, \ldots, m_{s}, n_{1}, \ldots, n_{t}$ that satisfy $m_{1} + \cdots + m_{s}
= 1 = n_{1} + \cdots + n_{t}$. Such a configuration of points and real numbers will be denoted by
$D = [x_{j}(m_{j});y_{i}(n_{i})]_{s,t}$ and will simply be called a \textit{simplex} when
no confusion can arise. We will call the vertices $x_{1}, \ldots, x_{s} \in D$ the \textit{$M$-team} and the
vertices $y_{1}, \ldots, y_{t} \in D$ the \textit{$N$-team}.
\end{definition}

Simplices with weights on the vertices were introduced by Weston \cite{We1} to study the generalized
roundness of finite metric spaces. The basis for the following definition is derived from the original
formulation of generalized roundness due to Enflo \cite{Enf}. Enflo introduced generalized roundness
in order to address a problem of Smirnov concerning the uniform structure of Hilbert spaces. There are
intimate ties between negative type and generalized roundness. See, for example, Prassidis and Weston
\cite{Pra}.

\begin{definition}\label{S4}
Suppose $p \geq 0$ and let $(X,d)$ be a metric space.
For each simplex $D = [x_{j}(m_{j});y_{i}(n_{i})]_{s,t}$ in $X$
we define
\begin{eqnarray*}
\gamma_{p}(D) & \defeq &
\sum\limits_{j,i = 1}^{s,t} m_{j}n_{i}d(x_{j},y_{i})^{p} -
\sum\limits_{ 1 \leq j_{1} < j_{2} \leq s} m_{j_{1}}m_{j_{2}}d(x_{j_{1}},x_{j_{2}})^{p} -
\sum\limits_{ 1 \leq i_{1} < i_{2} \leq t} n_{i_{1}}n_{i_{2}}d(y_{i_{1}},y_{i_{2}})^{p}.
\end{eqnarray*}
We call $\gamma_{p}(D)$ the \textit{$p$-simplex gap of $D$ in $(X,d)$}.
\end{definition}

It is worth noting that for any given $p \geq 0$, simplices and simplex gaps may be used to characterize metric spaces of
(strict) $p$-negative type. Indeed, Lennard \textit{et al}.\ \cite{Ltw} established
that a metric space $(X,d)$ has $p$-negative type if and only if $\gamma_{p}(D) \geq 0$
for each simplex $D$ in $X$. Doust and Weston \cite{Do1} further noted that a metric space $(X,d)$
has strict $p$-negative type if and only if $\gamma_{p}(D) > 0$ for each simplex $D$ in $X$.
The main significance of simplices and $p$-simplex gaps for this paper is the following theorem.

\begin{theorem}\label{S5}
Let $(X,d)$ be a metric space with $p$-negative type for some $p \geq 0$. Then $\Gamma_{X}(p) = \inf \gamma_{p}(D)$
where the infimum is taken over all simplices $D = [x_{j}(m_{j});y_{i}(n_{i})]_{s,t}$ in $X$.
\end{theorem}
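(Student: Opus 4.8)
The plan is to prove Theorem \ref{S5} by a double inequality, connecting the $p$-negative type gap defined via signed sums $\sum_{j,i} d(z_j,z_i)^p \zeta_j \zeta_i$ (Definition \ref{enh:type}) with the infimum of simplex gaps $\gamma_p(D)$ (Definition \ref{S4}). The bridge is the elementary observation that any admissible weighting $\boldsymbol{\zeta}$ with $\sum_k \zeta_k = 0$ splits according to the sign of its entries: write $\zeta_k = m_k \ge 0$ on one index set and $\zeta_k = -n_k$ with $n_k > 0$ on the complementary set (discarding zero entries, which do not affect either side). Setting $h = \sum m_j = \sum n_i = \tfrac12 \sum_k |\zeta_k|$, one checks by a direct expansion that
\begin{eqnarray*}
\sum\limits_{j,i=1}^{n} d(z_j,z_i)^p \zeta_j \zeta_i & = & -h^2 \cdot \gamma_p(D),
\end{eqnarray*}
where $D = [x_j(m_j/h); y_i(n_i/h)]_{s,t}$ is the normalized simplex obtained by rescaling the positive and negative parts to have unit mass. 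Indeed, the cross terms $x_j$–$y_i$ carry sign $m_j \cdot(-n_i) < 0$ and the within-team terms carry sign $+m_{j_1}m_{j_2}$ and $+n_{i_1}n_{i_2}$, so after pulling out $h^2$ the bracketed expression is exactly $-\gamma_p(D)$.

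Given that identity, the theorem falls out quickly. First I would prove $\Gamma_X(p) \le \inf \gamma_p(D)$: fix any simplex $D = [x_j(m_j); y_i(n_i)]_{s,t}$ and let $\boldsymbol{\zeta}$ be the associated signed tuple ($+m_j$ on the $M$-team, $-n_i$ on the $N$-team), so $\sum_k |\zeta_k| = 2$ and $h = 1$. The defining inequality of $\Gamma_X(p)$ then reads $\tfrac{\Gamma_X(p)}{2}\cdot 4 + (-\gamma_p(D)) \le 0$, i.e.\ $2\Gamma_X(p) \le \gamma_p(D)$—wait, more carefully: $\tfrac{\Gamma}{2}(\sum|\zeta_k|)^2 = \tfrac{\Gamma}{2}\cdot 4 = 2\Gamma$ is wrong scaling; rather one should normalize so that $\sum|\zeta_k| = 1$, i.e.\ take weights $m_j/2, n_i/2$, giving $\tfrac{\Gamma}{2} - \tfrac14\gamma_p(D) \le 0$, hence $\Gamma_X(p) \le \tfrac12 \gamma_p(D)$. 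This suggests the identity above should more precisely be stated with the factor $h = \tfrac12\sum|\zeta_k|$ so that things line up; I would track the constant carefully and it will come out to $\Gamma_X(p) = \inf\gamma_p(D)$ exactly once the normalization $\sum m_j = \sum n_i = 1$ in Definition \ref{S1} is respected. For the reverse inequality $\Gamma_X(p) \ge \inf\gamma_p(D)$: take an arbitrary admissible $\boldsymbol{\zeta} \ne \mathbf{0}$, discard zero entries, form the simplex $D$ and constant $h > 0$ as above, and observe that the identity gives $\tfrac{(\inf\gamma_p)\cdot}{2}(\sum|\zeta_k|)^2 + \sum d^p\zeta_j\zeta_i = 2h^2\inf\gamma_p(D) - h^2\gamma_p(D) \le 2h^2\inf\gamma_p - h^2\inf\gamma_p$... again I must watch the factor of two, but the structure is that the same constant works for every $\boldsymbol{\zeta}$, so $\inf\gamma_p(D)$ is an admissible value of $\Gamma$ in Definition \ref{enh:type}, whence $\Gamma_X(p) \ge \inf\gamma_p(D)$.

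A few edge cases need a remark. If $\boldsymbol{\zeta} = \mathbf 0$ the inequality in Definition \ref{enh:type} is trivial, so it imposes no constraint. If all nonzero entries have the same sign, then $\sum_k\zeta_k = 0$ forces $\boldsymbol{\zeta} = \mathbf 0$, so every nontrivial $\boldsymbol{\zeta}$ genuinely produces both an $M$-team and an $N$-team with $s,t \ge 1$, matching Definition \ref{S1}'s requirement $s,t > 0$. One should also note that $(X,d)$ having $p$-negative type guarantees $\gamma_p(D) \ge 0$ for all $D$ (by the cited result of Lennard \textit{et al}.), so $\inf\gamma_p(D) \ge 0$ and the infimum is a legitimate candidate for the "largest non-negative constant" in Definition \ref{enh:type}; this is what makes the two notions match rather than one being vacuous.

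The main obstacle is purely bookkeeping: getting the normalization constants consistent between the two definitions—Definition \ref{enh:type} weights by $\tfrac12(\sum|\zeta_k|)^2$ while Definition \ref{S1} demands the $M$- and $N$-team weights each sum to $1$—so that the algebraic identity translates into the clean equality $\Gamma_X(p) = \inf\gamma_p(D)$ with no stray factor of $2$. I would handle this by choosing, for a given nonzero $\boldsymbol{\zeta}$, the scaling that makes $\sum_k|\zeta_k| = 2$ (equivalently $\sum m_j = \sum n_i = 1$), verifying the identity $\sum_{j,i}d(z_j,z_i)^p\zeta_j\zeta_i = -\gamma_p(D)$ in that normalization, and then noting that both sides of the inequality in Definition \ref{enh:type} scale homogeneously of degree two in $\boldsymbol{\zeta}$, so the particular scaling is harmless. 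Everything else is a short expansion of squares and a supremum/infimum argument.
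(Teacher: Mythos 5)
Your overall strategy is the standard one and is sound; note, though, that the paper itself does not prove this theorem but simply cites Doust and Weston \cite[Theorem 4.16]{Do2} and remarks that the argument there adapts to general $p \geq 0$, so you are supplying a proof the paper omits. The one substantive issue is the central identity, which you state as $\sum_{j,i} d(z_j,z_i)^p \zeta_j \zeta_i = -h^2\gamma_p(D)$ and then never pin down (hence the recurring ``stray factor of $2$'' confusion). The correct identity is
\begin{equation*}
\sum_{j,i=1}^{n} d(z_j,z_i)^{p}\,\zeta_j\zeta_i \;=\; -2h^{2}\,\gamma_p(D),
\qquad h = \tfrac12\sum_k|\zeta_k|,
\end{equation*}
because the left-hand sum runs over \emph{ordered} pairs: each unordered within-team pair $\{x_{j_1},x_{j_2}\}$ or $\{y_{i_1},y_{i_2}\}$ and each cross pair $(x_j,y_i)$ is counted twice, whereas Definition \ref{S4} counts within-team pairs once ($j_1<j_2$) and cross pairs once. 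With this corrected identity the bookkeeping closes with no residual constant: since $\bigl(\sum_k|\zeta_k|\bigr)^2 = 4h^2$, the inequality of Definition \ref{enh:type} becomes $2\Gamma h^2 - 2h^2\gamma_p(D) \leq 0$, i.e.\ $\Gamma \leq \gamma_p(D)$, so a constant $\Gamma$ is admissible if and only if $\Gamma \leq \gamma_p(D)$ for every simplex $D$, and $\Gamma_X(p) = \inf_D \gamma_p(D)$ follows at once. Your intermediate conclusion $\Gamma_X(p) \leq \tfrac12\gamma_p(D)$ is an artifact of the uncorrected identity and should be discarded. Your treatment of the edge cases (zero entries, the triviality of $\boldsymbol{\zeta}=\mathbf{0}$, the fact that a nonzero mean-zero tuple forces $s,t\geq 1$, and the nonnegativity of $\inf\gamma_p(D)$ via Lennard \emph{et al.}) is correct and worth keeping.
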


The proof of Theorem \ref{S5} in the case $p = 1$ is due to Doust and Weston \cite[Theorem 4.16]{Do2}.
The proof in \cite{Do2} may be easily adapted to deal with any fixed $p \geq 0$.

\begin{remark}\label{compact}
The set of simplices in a finite set $X$ can
be identified with a compact metric space $\mathcal{N}$. The first step is to fix an enumeration
of the elements of $X$. Say, $X= \{ z_{1}, z_{2}, \ldots, z_{n} \}$. Let
\begin{eqnarray*}
\mathcal{N} & \defeq & \left\{ \boldsymbol{\omega}
= (\omega_{k}) \in \mathbb{R}^{n} : \sum \omega_{k} = 0 \text{ and } \sum |\omega_{k}| = 2 \right\}.
\end{eqnarray*}
The set $\mathcal{N}$ inherits the Euclidean metric on $\mathbb{R}^{n}$ and is thus a compact metric space.
For each $\boldsymbol{\omega} \in \mathcal{N}$ we may construct a simplex in the
following manner. If $\omega_{j} > 0$, we put $z_{j}$ on the $M$-team with corresponding weight
$\omega_{j}$. If $\omega_{i} < 0$, we put $z_{i}$ on the $N$-team with corresponding weight $- \omega_{i}$.
By relabeling, the $M$ and $N$-teams may be enumerated as $x_{1}, \ldots, x_{s}$ and $y_{1}, \ldots, y_{t}$
with corresponding weights $m_{1}, \ldots, m_{s}$ and $n_{1}, \ldots, n_{t}$, respectively.
(More precisely, define $x_{1} = z_{j}$ where $j$ is the smallest $k \in [n]$ such that $\omega_{k} > 0$, and so on.)
The simplex $D = [x_{j}(m_{j});y_{i}(n_{i})]_{s,t}$ generated in this way will be denoted $\boldsymbol{\omega}$,
allowing one to write $\gamma_{p}(\boldsymbol{\omega})$ instead of $\gamma_{p}(D)$.
It is plain that every simplex in $X$ arises in the way we have described. Moreover, given $p \geq 0$ and
$\boldsymbol{\omega} \in \mathcal{N}$, it is not difficult to verify that
\begin{eqnarray}\label{continuity}
2 \cdot \gamma_{p}(\boldsymbol{\omega}) & = & - \boldsymbol{\omega} \cdot D_{p} \boldsymbol{\omega},
\end{eqnarray}
where $D_{p}$ denotes the so-called $p$-\textit{distance matrix} $(d(z_{i}, z_{j})^{p})_{n,n}$.
\end{remark}

\begin{theorem}\label{gap:evl}
Let $(X,d)$ be a finite metric space with $p$-negative type for some $p \geq 0$. Then there exists
a simplex $\boldsymbol{\omega}_{0}$ in $X$ such that $\Gamma_{X}(p) = \gamma_{p}(\boldsymbol{\omega}_{0})$.
\end{theorem}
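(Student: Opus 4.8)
The plan is to exploit the compactness of $\mathcal{N}$ from Remark~\ref{compact} together with the continuity of the map $\boldsymbol{\omega} \mapsto \gamma_{p}(\boldsymbol{\omega})$, so that the infimum defining $\Gamma_{X}(p)$ in Theorem~\ref{S5} is actually attained. First I would invoke Theorem~\ref{S5} to identify $\Gamma_{X}(p)$ with $\inf \gamma_{p}(D)$ taken over all simplices $D$ in $X$, and then use the bijection described in Remark~\ref{compact} between simplices in $X$ and points of the compact set $\mathcal{N} \subseteq \mathbb{R}^{n}$, under which $\gamma_{p}(D)$ corresponds to $\gamma_{p}(\boldsymbol{\omega})$. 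Thus $\Gamma_{X}(p) = \inf_{\boldsymbol{\omega} \in \mathcal{N}} \gamma_{p}(\boldsymbol{\omega})$.

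Next I would establish that $\boldsymbol{\omega} \mapsto \gamma_{p}(\boldsymbol{\omega})$ is continuous on $\mathcal{N}$. The cleanest route is the identity (\ref{continuity}), namely $2 \gamma_{p}(\boldsymbol{\omega}) = -\boldsymbol{\omega} \cdot D_{p}\boldsymbol{\omega}$, which exhibits $\gamma_{p}$ as (a scalar multiple of) a quadratic form in the coordinates of $\boldsymbol{\omega}$; such a form is visibly a polynomial in $\omega_{1}, \dots, \omega_{n}$ and hence continuous, and the restriction of a continuous function to $\mathcal{N}$ remains continuous. (Alternatively one checks directly from Definition~\ref{S4} that each of the three sums depends continuously on the weights, noting that the relabeling into $M$- and $N$-teams only reshuffles coordinates.) Since $\mathcal{N}$ is a nonempty compact metric space and $X$ has at least two points so that $\mathcal{N} \neq \emptyset$, the extreme value theorem guarantees a point $\boldsymbol{\omega}_{0} \in \mathcal{N}$ at which $\gamma_{p}$ attains its minimum over $\mathcal{N}$. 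Then $\boldsymbol{\omega}_{0}$ is the desired simplex: $\Gamma_{X}(p) = \inf_{\boldsymbol{\omega} \in \mathcal{N}} \gamma_{p}(\boldsymbol{\omega}) = \gamma_{p}(\boldsymbol{\omega}_{0})$.

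The only mild subtlety — and the step I would treat most carefully — is the passage between "simplices in $X$" as they appear in Theorem~\ref{S5} (where the $M$- and $N$-teams must each be nonempty) and the parametrization by $\mathcal{N}$: one should confirm that every $\boldsymbol{\omega} \in \mathcal{N}$ does yield a genuine simplex (both teams nonempty, since $\sum \omega_{k} = 0$ and $\sum|\omega_{k}| = 2 > 0$ force at least one positive and at least one negative coordinate) and, conversely, that every simplex arises this way after normalizing the total weight to $2$ — but this is exactly what Remark~\ref{compact} records, so no real work is needed beyond citing it. With these identifications in hand the argument is immediate from compactness.
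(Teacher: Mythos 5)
Your argument is correct and is essentially the paper's own proof: both identify $\Gamma_{X}(p)$ with $\inf_{\boldsymbol{\omega}\in\mathcal{N}}\gamma_{p}(\boldsymbol{\omega})$ via Theorem \ref{S5} and Remark \ref{compact}, deduce continuity of $\gamma_{p}$ from the quadratic-form identity (\ref{continuity}), and conclude by compactness of $\mathcal{N}$. The only quibble is that no normalization of the total weight is needed when passing from a simplex to a point of $\mathcal{N}$, since the condition $m_{1}+\cdots+m_{s}=1=n_{1}+\cdots+n_{t}$ already forces $\sum|\omega_{k}|=2$.
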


\begin{proof}
Assuming an enumeration of $X$ as in Remark \ref{compact} we let $\mathcal{N}$ denote the set
of simplices in $X$. For each $p \geq 0$ it is immediately evident from (\ref{continuity}) that
the function $\gamma_{p} : \mathcal{N} \rightarrow [0, \infty) : \boldsymbol{\omega}
\rightarrow \gamma_{p}(\boldsymbol{\omega})$ is continuous. As $\mathcal{N}$ is a compact
metric space it then follows that the infimum
\begin{eqnarray*}
\Gamma_{X}(p) & = & \inf\limits_{\boldsymbol{\omega} \in \mathcal{N}} \gamma_{p}(\boldsymbol{\omega})
\end{eqnarray*}
is attained for some simplex $\boldsymbol{\omega}_{0} \in \mathcal{N}$.
\end{proof}

\section{Proximity dendrograms and finite ultrametric spaces}\label{sec:den}

In this section we briefly recall the bijection between proximity dendrograms and finite ultrametric
spaces. Our approach and notation is largely based on that of Carlsson and M\'{e}moli \cite{Car}.

\begin{definition}
Given a non-empty finite set $X$, we let $\mathcal{P}(X)$ denote the set of all partitions of $X$.
Given a partition $\pi \in \mathcal{P}(X)$ we call each element $\mathbf{v} \in \pi$ a \textit{block}
of $\pi$. Given partitions $\pi_{1}, \pi_{2} \in \mathcal{P}(X)$ we say that $\pi_{1}$
is a \textit{refinement} of $\pi_{2}$ if each block of $\pi_{1}$ is contained in some block of $\pi_{2}$.
A refinement $\pi_{1}$ of $\pi_{2}$ is said to be \textit{proper} if $\pi_{1} \not= \pi_{2}$.
\end{definition}

Proximity dendrograms are defined in terms of certain nested collections of partitions as follows.

\begin{definition}\label{prox:d}
A \textit{proximity dendrogram} is a triple $\mathcal{D} = (X, \{ \alpha_{0}, \alpha_{1}, \ldots, \alpha_{\ell} \}, \pi)$
consisting of a non-empty finite set $X$, finitely many real numbers $0 = \alpha_{0} < \alpha_{1} < \cdots < \alpha_{\ell}$
and a function $\pi : \{ \alpha_{0}, \alpha_{1}, \ldots, \alpha_{\ell} \} \rightarrow \mathcal{P}(X)$ with the
following properties:
\begin{enumerate}
\item $\pi(\alpha_{0}) = \{ \{ x \} : x \in X \}$,

\item $\pi(\alpha_{\ell}) = \{ X \}$, and

\item if $k \in [\ell]$, then $\pi(\alpha_{k-1})$ is a proper refinement of $\pi(\alpha_{k})$.
\end{enumerate}
We call the set $\{ \alpha_{0}, \alpha_{1}, \ldots, \alpha_{\ell} \}$ the \textit{proximity part} of $\mathcal{D}$.
\end{definition}

Notice that conditions (1), (2) and (3) in Definition \ref{prox:d} imply that $\ell \geq 1$. Every proximity
dendrogram generates a directed rooted tree $T$ and this ultimately leads to the definition of an ultrametric on $X$.

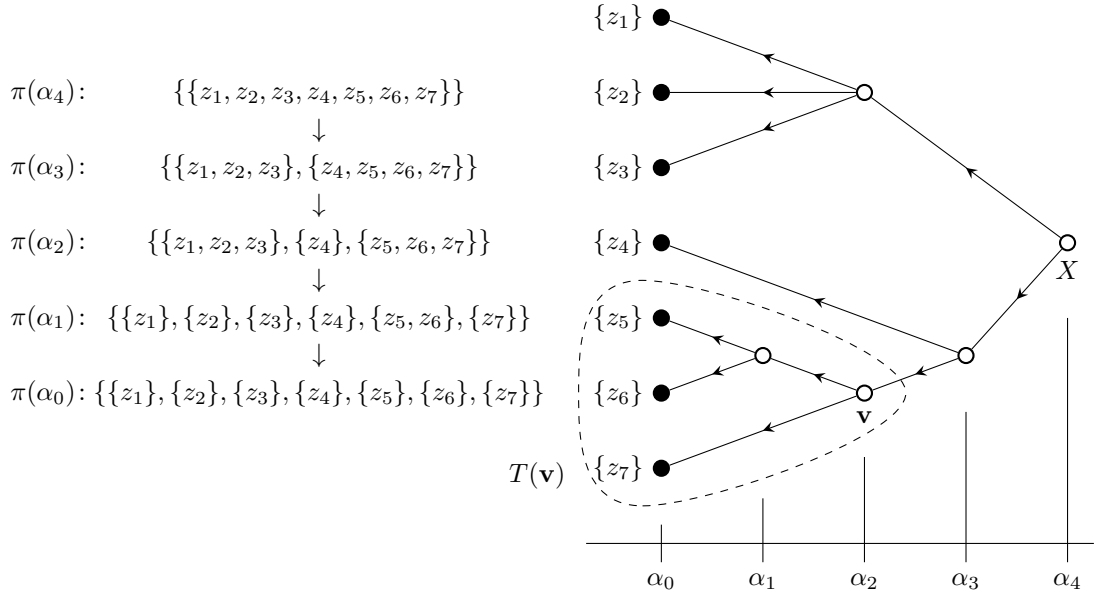
\begin{figure}[h!]
\begin{center}
\begin{tikzpicture}[scale=0.5] 

 \node (p4) at (-16.3, 10) {$\pi(\alpha_4)\!:$};
 \path (p4) + (7.2,0) node (4l) { $\{ \{ z_{1}, z_{2}, z_{3}, z_{4}, z_{5}, z_{6}, z_{7}\} \}$ };
 
 \path (4l) + (0,-1) node (4a) {$\downarrow$};
 
 \path (p4) + (0,-2) node (p3) {$\pi(\alpha_3)\!:$};
 \path (4l) + (0,-2) node (3l)  {  $\{ \{ z_{1}, z_{2}, z_{3}\},  \{ z_{4}, z_{5}, z_{6}, z_{7}\} \}$  };
 
 \path (3l) + (0,-1) node (3a) {$\downarrow$};
 
 \path (p3) + (0,-2) node (p2) {$\pi(\alpha_2)\!:$};
 \path (3l) + (0,-2) node (2l) { $\{ \{ z_{1}, z_{2}, z_{3}\}, \{ z_{4}\}, \{ z_{5}, z_{6}, z_{7}\} \}$ };
  
 \path (2l) + (0,-1) node (2a)  {$\downarrow$};
  
 \path (p2) + (0,-2) node (p1) {$\pi(\alpha_1)\!:$}; 
 \path (2l) + (0,-2) node (1l) { $\{ \{ z_{1}\} ,\{ z_{2}\} ,\{ z_{3}\}, \{ z_{4}\}, \{ z_{5}, z_{6}\}, \{ z_{7} \} \}$ };
 
 \path (1l) + (0,-1) node (1a) {$\downarrow$};
 
 \path (p1) + (0,-2) node (p0) {$\pi(\alpha_0)\!:$};
 \path (1l) + (0,-2) node (0l)  { $\{ \{ z_{1}\} ,\{ z_{2}\} ,\{ z_{3}\}, \{ z_{4}\}, \{ z_{5}\}, \{ z_{6}\}, \{ z_{7}\} \}$ };

 \node[black,label=left:{$\{ z_{7}\}$}] (1) at (0,0) {};
 \node[black,label=left:{$\{ z_{6}\}$}] (2) at (0,2) {};
 \node[black,label=left:{$\{ z_{5}\}$}] (3) at (0,4) {};
 \node[black,label=left:{$\{ z_{4}\}$}] (4) at (0,6) {};
 \node[black,label=left:{$\{ z_{3}\}$}] (5) at (0,8) {};
 \node[black,label=left:{$\{ z_{2}\}$}] (6) at (0,10) {};
 \node[black,label=left:{$\{ z_{1}\}$}] (12) at (0,12) {};
 
 \node[white] (7) at (2.7,3) {};
 
 \node[white,label=below:{$\mathbf{v}$}] (8) at (5.4,2) {};
 \node[white] (9) at (5.4,10) {};
 
 \node[white] (10) at (8.1,3) {};
 
 \node[white,label=below:{$X$}] (11) at (10.8,6) {};
 
 \draw [->-,>=stealth]  (8) -- (1) ;
 \draw [->-,>=stealth] (7) -- (2) ;
 \draw [->-,>=stealth]  (7) -- (3) ;
 \draw [->-,>=stealth]  (8) -- (7) ;
 \draw [->-,>=stealth]  (10) -- (8) ;
 \draw [->-,>=stealth]  (10) -- (4) ;
 \draw [->-,>=stealth]  (9) -- (5) ;
 \draw [->-,>=stealth]  (9) -- (6) ;
 \draw [->-,>=stealth]  (9) -- (12) ;
 \draw [->-,>=stealth]  (11) -- (9) ;
 \draw [->-,>=stealth]  (11) -- (10) ;

 \draw (-2,-2) -- (11.5,-2) ; 
 
 \draw (0,-2.5) -- (0,-1.5) ;
 \node at (0,-3) {$\alpha_0$} ;
 
 \draw (2.7,-2.5) -- (2.7,-0.8);
 \node at (2.7,-3) {$\alpha_1$} ;
 
 \draw (5.4,-2.5) -- (5.4,0.3) ;
 \node at (5.4,-3) {$\alpha_2$};
 
 \draw (8.1,-2.5) -- (8.1,1.5) ;
 \node at (8.1,-3) {$\alpha_3$};
 
 \draw (10.8,-2.5) -- (10.8,4) ;
 \node at (10.8,-3) {$\alpha_4$};

 \draw[dashed] plot [smooth cycle,tension=0.7] coordinates {(-2.2,2)  (-0.5,-1)  (6.5,2) (-0.5,5)}; 
 
 \node at (-3.3,-0.2) {$T(\mathbf{v})$};
 
\end{tikzpicture} 
\end{center} 
\caption{A proximity dendrogram with the sequence of partitions on the left and the generated directed tree on the right.
The subtree $T(\mathbf{v})$ generated by the node $\mathbf{v}$ is circled.}
\end{figure}

\begin{definition}\label{prox:tree}
Let $\mathcal{D} = (X, \{ \alpha_{0}, \alpha_{1}, \ldots, \alpha_{\ell} \}, \pi)$ be a given proximity dendrogram.
\begin{enumerate}
\item[(a)] The nodes $\mathbf{v}$ of the \textit{tree $T = T(\mathcal{D})$ generated by $\mathcal{D}$} consist of the blocks
appearing in the partitions $\pi(\alpha_0),\dots,\pi(\alpha_{\ell})$.

\item[(b)] The edge set of $T$ is determined by
the Hasse diagram for this node set, partially ordered by set inclusion.

\item[(c)] If there is an edge between nodes $\mathbf{v}$
and $\mathbf{w}$ in $T$, it is directed to go from $\mathbf{v}$ to $\mathbf{w}$ if $\mathbf{w} \subset \mathbf{v}$.

\item[(d)] If there is a directed edge from $\mathbf{v}$ to $\mathbf{w}$ in $T$
we say that $\mathbf{w}$ is \textit{left-adjacent} to $\mathbf{v}$ in $T$. We let $\Adj (\mathbf{v})$
denote the set of nodes $\mathbf{w} \in T$ that are left-adjacent to $\mathbf{v}$.
\end{enumerate}
\end{definition}

More generally, a node $\mathbf{w}$ is said to be to the \textit{left} of a node $\mathbf{v}$ in
$T$ if $\mathbf{w} \subset \mathbf{v}$. Notice that the leaves of $T$ are the blocks in
$\pi(\alpha_{0})$ and the root is the block $\mathbf{r} = X$. Additional structure of $T$ is defined as follows.

\begin{definition} Let $\mathcal{D} = (X, \{ \alpha_{0}, \alpha_{1}, \ldots, \alpha_{\ell} \}, \pi)$
be a given proximity dendrogram. Let $T = T(\mathcal{D})$.
\begin{enumerate}
\item[(a)] The \textit{level} of a node $\mathbf{v} \in T$ is the smallest $k$ such that
$\mathbf{v} \in \pi(\alpha_{k})$. We let $\Pi_{k}$ denote the set of all level $k$ nodes in $T$,
$0 \leq k \leq \ell$.

\item[(b)] The \textit{subtree $T(\mathbf{v})$ of $T$ generated by a level $k$ node $\mathbf{v} \in \Pi_{k}$}
is the directed subtree of $T$ that consists of $\mathbf{v}$ together with all nodes and edges that are to the
left of $\mathbf{v}$.

\item[(c)] The \textit{left-degree} $b(\mathbf{v})$ of a node $\mathbf{v} \in T$ is the degree of the node
$\mathbf{v}$ in the subtree $T(\mathbf{v})$.
\end{enumerate}
\end{definition}

Notice that $b(\mathbf{v})$ signifies the number of directed edges in $T$ emanating to the left from
$\mathbf{v}$. Furthermore, if $k \in [\ell]$ and $\mathbf{v} \in \Pi_{k}$, then $b(\mathbf{v}) \geq 2$.
Clearly, $b(\mathbf{v}) = |\Adj (\mathbf{v})|$ for all $\mathbf{v} \in T$.

Proximity dendrograms give rise to finite ultrametric spaces. Indeed,
given a proximity dendrogram $\mathcal{D} = (X, \{ \alpha_{0}, \alpha_{1}, \ldots, \alpha_{\ell} \}, \pi)$
and given $x,y \in X$, it is not difficult to verify that
\begin{eqnarray*}
d_{\mathcal{D}}(x,y) & \defeq & \min \{ \alpha_{k} : x,y \text{ belong to the same block of } \pi(\alpha_{k}) \}
\end{eqnarray*}
defines an ultrametric on $X$. Conversely, if $d$ is an ultrametric on a finite set $X$ with at least two
elements, then there is a unique proximity dendrogram $\mathcal{D} = (X, \{ \alpha_{0}, \alpha_{1}, \ldots, \alpha_{\ell} \}, \pi)$
such that $d = d_{\mathcal{D}}$. (A detailed explanation of this bijection is given in Carlsson and M\'{e}moli
\cite[Theorem 9]{Car}.) We call $\mathcal{D}$ the \textit{proximity dendrogram associated with $(X,d)$}.
Notice that $\alpha_{1} < \alpha_{2} < \cdots < \alpha_{\ell}$ are the non-zero distances in $(X,d)$.

\begin{remark}\label{cot:rem}
Notice that if $\mathcal{D} = (X, \{ \alpha_{0}, \alpha_{1}, \ldots, \alpha_{\ell} \}, \pi)$
is the proximity dendrogram associated with a finite ultrametric space $(X,d)$ then, by Definition \ref{cots}, the
coteries in $X$ are precisely the blocks in $\Pi_{1}$.
\end{remark}

Consider a given finite ultrametric space $(X,d)$, $|X| > 1$. Suppose that
$\mathcal{D} = (X, \{ \alpha_{0}, \alpha_{1}, \ldots, \alpha_{\ell} \}, \pi)$ is the
proximity dendrogram associated with $(X,d)$. Let $p > 0$ be given. Clearly $d^{p}$
is also an ultrametric on $X$. Moreover, the proximity dendrogram associated with
$(X, d^{p})$ is simply $\mathcal{D}_{p} \defeq
(X, \{ \alpha_{0}^{p}, \alpha_{1}^{p}, \ldots, \alpha_{\ell}^{p} \}, \pi^{\prime})$, where
$\pi^{\prime}(\alpha_{k}^{p}) = \pi(\alpha_{k})$ for each $k$, $0 \leq k \leq \ell$.
It follows that the trees generated by $\mathcal{D}$ and $\mathcal{D}_{p}$ are identical.
Thus $T(\mathcal{D}) = T(\mathcal{D}_{p})$ for all $p > 0$.
Likewise, $d^{\prime} \defeq d / \alpha_{1}$ is an ultrametric on $X$. Moreover,
$(X,d)$ and $(X,d^{\prime})$ share the same proximity dendrogram tree $T$. The minimum non-zero distance
in $(X,d^{\prime})$ is $1$ and it follows from Definition \ref{enh:type} that
\begin{eqnarray}\label{Y}
\Gamma_{(X,d)}(p) & = & \alpha_{1}^{p} \cdot \Gamma_{(X, d^{\prime})}(p).
\end{eqnarray}
Provided this scaling is ultimately taken into account we may, henceforth, restrict our attention to
those finite ultrametric spaces that have minimum non-zero distance $\alpha_{1} = 1$. This assumption
greatly simplifies the following calculations and it must therefore be regarded as being more than a
purely cosmetic maneouver.

\section{Combinatorial properties of ultrametric simplex gaps $\gamma_{p}(\boldsymbol{\omega})$}\label{sec:4}

Let $d$ be a given ultrametric on a finite set $X = \{ z_{1}, \ldots, z_{n} \}$ where $n > 1$. Assume
that the minimum non-zero distance in $(X,d)$ is $\alpha_{1} = 1$. We may identify the set of all simplices
in $X$ with the compact metric space $\mathcal{N} \subset \mathbb{R}^{n}$ described in Remark \ref{compact}.
Let $\mathcal{D} = (X, \{ \alpha_{0}, \alpha_{1}, \ldots, \alpha_{\ell} \}, \pi)$ denote the unique
proximity dendrogram associated with $(X,d)$ and set $T = T(\mathcal{D})$. Lastly,
fix a simplex $\boldsymbol{\omega} = [x_{j}(m_{j}); y_{i}(n_{i})]_{s,t} \in \mathcal{N}$. We examine the
behavior of $\gamma_{p}(\boldsymbol{\omega})$ as $p \rightarrow \infty$.
The arguments are based on the following notions.

\begin{definition}
For each node $\mathbf{v} \in T$ we define the \textit{block sums}:
\begin{eqnarray*}
M(\mathbf{v}) & \defeq & \sum\limits_{j : \{ x_{j} \} \in T(\mathbf{v})} m_{j}, \text{ and} \\
N(\mathbf{v}) & \defeq & \sum\limits_{i : \{ y_{i} \} \in T(\mathbf{v})} n_{i}.
\end{eqnarray*}
We say that the subtree $T(\mathbf{v})$ is \textit{simplicially balanced} if $M(\mathbf{v}) = N(\mathbf{v})$.
\end{definition}

\begin{remark}
Notice that if $\mathbf{v}$ is a block of the form $\{ x_{j} \}$ or $\{ y_{i} \}$, then
$|M(\mathbf{v}) - N(\mathbf{v})| = m_{j}$ or $n_{i}$ (respectively), and so $T(\mathbf{v})$ is not simplicially balanced.
\end{remark}

Suppose that $p > 0$ and let $\gamma(p) \defeq \gamma_{p}(\boldsymbol{\omega})$. Note that $\gamma(p) > 0$ because
$(X,d)$ has strict $p$-negative type. We begin by developing a formula for $\gamma(p)$ in terms of the block sums
defined above. By Definition \ref{S4},
\begin{eqnarray*}
\gamma(p) & = & \sum\limits_{j,i = 1}^{s,t} m_{j}n_{i}d^{p}(x_{j},y_{i}) \\
& ~ & - \sum\limits_{1 \leq j_{1} < j_{2} \leq s} m_{j_{1}}m_{j_{2}}d^{p}(x_{j_{1}},x_{j_{2}})
- \sum\limits_{1 \leq i_{1} < i_{2} \leq t} n_{i_{1}}n_{i_{2}}d^{p}(y_{i_{1}},y_{i_{2}}).
\end{eqnarray*}
Since the non-zero distances in $(X,d)$ are $\alpha_{1}, \ldots, \alpha_{\ell}$ we see that
$\gamma(p) = c_{1}\alpha_{1}^{p} + \cdots + c_{\ell}\alpha_{\ell}^{p}$, where $c_{1}, \ldots, c_{\ell}$
are constants to be determined.

\subsection{Evaluation and properties of the constants $c_{k}$}

Suppose $k \in [\ell]$. We wish to determine a formula for $c_{k}$. We begin by noting that for all
$x,y \in X$, $d(x, y) = \alpha_{k}$ if and only there exists a level $k$ node $\mathbf{v} \in T$ such that
$x$ and $y$ belong to distinct blocks of $\Adj (\mathbf{v})$. Hence, using Definition \ref{S4},
\begin{eqnarray*}
c_{k} = \sum\limits_{\mathbf{v} \in \Pi_{k}} c_{k}(\mathbf{v}),
\end{eqnarray*}
where $c_{k}(\mathbf{v})$ is defined as follows.

\begin{definition}
Let $\mathbf{v} \in \Pi_{k}$. Then,
\begin{eqnarray*}
c_{k}(\mathbf{v}) & \defeq &
\sum\limits_{\stackrel{i,j = 1}{i \not= j}}^{b(\mathbf{v})}  M(\mathbf{u}_{i})N(\mathbf{u}_{j})
- \sum\limits_{1 \leq i < j \leq b(\mathbf{v})}  \left\{ M(\mathbf{u}_{i})M(\mathbf{u}_{j}) +
N(\mathbf{u}_{i})N(\mathbf{u}_{j}) \right\},
\end{eqnarray*}
where $\mathbf{u}_{1}, \ldots, \mathbf{u}_{b(\mathbf{v})}$ denote the
distinct blocks of $\Adj (\mathbf{v})$.
\end{definition}

Lemma \ref{lem:1} provides a formula for each $c_{k}(\mathbf{v})$ in terms of certain block sums.
The proof is expedited by the following easily verified identity.

\begin{lemma}\label{lem:0}
For any integer $k > 1$ and any choice of real numbers
$a_{1}, \ldots, a_{k}, b_{1}, \ldots, b_{k}$, we have
\begin{eqnarray*}
\sum\limits_{\stackrel{i,j = 1}{i \not= j}}^{k}  a_{i}b_{j}
- \sum\limits_{i < j}  \left\{  a_{i}a_{j} + b_{i}b_{j} \right\} & = &
\sum\limits_{i=1}^{k} \frac{(a_{i} - b_{i})}{2} \cdot
\left\{ \sum\limits_{\stackrel{j = 1}{j \not= i}}^{k} (b_{j} - a_{j}) \right\}.
\end{eqnarray*}
\end{lemma}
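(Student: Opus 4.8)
The plan is to verify the identity by a direct expansion of both sides, treating the two sums on the left separately and matching coefficients of the monomials $a_ia_j$, $b_ib_j$ and $a_ib_j$. First I would rewrite the left-hand side as
\begin{eqnarray*}
L & = & \sum_{\substack{i,j=1\\ i\neq j}}^{k} a_ib_j - \sum_{i<j} a_ia_j - \sum_{i<j} b_ib_j,
\end{eqnarray*}
and note that $\sum_{i<j} a_ia_j = \tfrac12\sum_{\substack{i,j\\ i\neq j}} a_ia_j$, similarly for the $b$-sum, so that every term is indexed over ordered pairs $(i,j)$ with $i\neq j$. Thus
\begin{eqnarray*}
L & = & \sum_{\substack{i,j\\ i\neq j}} \left( a_ib_j - \tfrac12 a_ia_j - \tfrac12 b_ib_j \right).
\end{eqnarray*}
For the right-hand side I would expand the product $\tfrac12(a_i-b_i)\sum_{j\neq i}(b_j-a_j)$ and then sum over $i$, obtaining
\begin{eqnarray*}
R & = & \tfrac12 \sum_{\substack{i,j\\ i\neq j}} (a_i - b_i)(b_j - a_j) \;=\; \tfrac12 \sum_{\substack{i,j\\ i\neq j}} \left( a_ib_j - a_ia_j - b_ib_j + b_ia_j \right).
\end{eqnarray*}

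The final step is to observe that the index set $\{(i,j): i\neq j\}$ is symmetric under interchange of $i$ and $j$, so $\sum_{i\neq j} b_ia_j = \sum_{i\neq j} a_ib_j$; hence the bracket in $R$ contributes $2a_ib_j - a_ia_j - b_ib_j$ after symmetrisation, giving $R = \sum_{i\neq j}(a_ib_j - \tfrac12 a_ia_j - \tfrac12 b_ib_j) = L$. Since neither expansion uses anything beyond finite arithmetic, there is no real obstacle here; the only point requiring any care is the bookkeeping that converts the two ``$i<j$'' sums on the left and the single ``$j\neq i$'' inner sum on the right into a common symmetric double sum over ordered pairs, and the symmetrisation of the mixed term $b_ia_j$. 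I would present this as a two- or three-line computation rather than an induction, since the identity is a pure algebraic rearrangement valid for all real $a_i,b_i$.
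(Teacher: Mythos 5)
Your computation is correct: rewriting both $i<j$ sums as $\tfrac12\sum_{i\neq j}$, expanding $\tfrac12\sum_{i\neq j}(a_i-b_i)(b_j-a_j)$, and symmetrising the cross term $b_ia_j$ does establish the identity. The paper offers no proof at all (it labels the lemma an ``easily verified identity''), and your direct expansion is precisely the verification the authors leave to the reader.
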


\begin{lemma}\label{lem:1}
For each $k \in [\ell]$ and each level $k$ node $\mathbf{v} \in T$,
\begin{eqnarray}\label{one}
2 c_{k}(\mathbf{v}) & = &
\left\{ \sum\limits_{\mathbf{u} \in \Adj (\mathbf{v})} (M(\mathbf{u}) - N(\mathbf{u}))^{2} \right\} -
(M(\mathbf{v}) - N(\mathbf{v}))^{2}.
\end{eqnarray}
If, moreover, the subtree $T(\mathbf{v})$ is simplicially balanced, then
$2 c_{k}(\mathbf{v}) = \sum\limits_{\mathbf{u} \in \Adj (\mathbf{v})} \left\{ (M(\mathbf{u}) - N(\mathbf{u}))^{2} \right\}$.
\end{lemma}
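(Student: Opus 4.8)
The plan is to prove the displayed identity for $2c_k(\mathbf{v})$ by a direct algebraic manipulation, applying Lemma~\ref{lem:0} with a judicious choice of the $a_i$'s and $b_i$'s, and then deriving the simplicially balanced case as an immediate consequence.

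First I would fix $k \in [\ell]$ and a level $k$ node $\mathbf{v} \in T$, and write $\mathbf{u}_1, \ldots, \mathbf{u}_b$ for the distinct blocks of $\Adj(\mathbf{v})$, where $b = b(\mathbf{v}) \geq 2$. The key observation is that the partition of the leaves of $T(\mathbf{v})$ into the leaf sets of $T(\mathbf{u}_1), \ldots, T(\mathbf{u}_b)$ is exactly the partition $\pi(\alpha_{k-1})$ restricted to $\mathbf{v}$; hence the block sums satisfy the additivity relations $M(\mathbf{v}) = \sum_{i=1}^{b} M(\mathbf{u}_i)$ and $N(\mathbf{v}) = \sum_{i=1}^{b} N(\mathbf{u}_i)$. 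With $a_i \defeq M(\mathbf{u}_i)$ and $b_i \defeq N(\mathbf{u}_i)$, the left-hand side of the identity in Lemma~\ref{lem:0} is precisely $c_k(\mathbf{v})$ by its definition, so Lemma~\ref{lem:0} gives
\begin{eqnarray*}
2 c_k(\mathbf{v}) & = & \sum_{i=1}^{b} (M(\mathbf{u}_i) - N(\mathbf{u}_i)) \cdot \left\{ \sum_{\stackrel{j=1}{j \neq i}}^{b} (N(\mathbf{u}_j) - M(\mathbf{u}_j)) \right\}.
\end{eqnarray*}

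Next, writing $\delta_i \defeq M(\mathbf{u}_i) - N(\mathbf{u}_i)$ and $\Delta \defeq M(\mathbf{v}) - N(\mathbf{v}) = \sum_{i=1}^{b} \delta_i$, the right-hand side above becomes $\sum_{i=1}^{b} \delta_i (-(\Delta - \delta_i)) = \sum_{i=1}^{b} (\delta_i^2 - \delta_i \Delta) = \sum_{i=1}^{b} \delta_i^2 - \Delta^2$. Since $\sum_{i=1}^{b} \delta_i^2 = \sum_{\mathbf{u} \in \Adj(\mathbf{v})} (M(\mathbf{u}) - N(\mathbf{u}))^2$ and $\Delta^2 = (M(\mathbf{v}) - N(\mathbf{v}))^2$, this is exactly equation~(\ref{one}). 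For the final assertion, if $T(\mathbf{v})$ is simplicially balanced then $M(\mathbf{v}) = N(\mathbf{v})$, so $(M(\mathbf{v}) - N(\mathbf{v}))^2 = 0$ and the correction term in (\ref{one}) vanishes, giving the stated simpler formula.

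I do not anticipate a serious obstacle here; the work is routine once the right substitution into Lemma~\ref{lem:0} is identified. The one point requiring a small amount of care is justifying the additivity of the block sums $M(\cdot)$ and $N(\cdot)$ over $\Adj(\mathbf{v})$ — this rests on the fact that the subtrees $T(\mathbf{u}_1), \ldots, T(\mathbf{u}_b)$ have pairwise disjoint leaf sets whose union is the leaf set of $T(\mathbf{v})$, which follows from $\mathbf{u}_1, \ldots, \mathbf{u}_b$ being the blocks of a partition of $\mathbf{v}$ (they are the children of $\mathbf{v}$ in the Hasse diagram). Everything else is bookkeeping.
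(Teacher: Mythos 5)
Your proposal is correct and follows essentially the same route as the paper's proof: the same substitution $a_i = M(\mathbf{u}_i)$, $b_i = N(\mathbf{u}_i)$ into Lemma~\ref{lem:0}, the same use of the additivity of block sums over $\Adj(\mathbf{v})$, and the same algebraic simplification to $\sum_i \delta_i^2 - \Delta^2$. Nothing is missing.
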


\begin{proof} We set $b = b(\mathbf{v})$ and let $\mathbf{u}_{1}, \ldots, \mathbf{u}_{b}$ denote the
distinct blocks of $\Adj (\mathbf{v})$. Notice then that
$M(\mathbf{v}) = \sum\limits_{j=1}^{b} M(\mathbf{u}_{j})$ and
$N(\mathbf{v}) = \sum\limits_{j=1}^{b} N(\mathbf{u}_{j})$. As a result we have
\begin{eqnarray*}
\sum\limits_{\stackrel{j = 1}{j \not= i}}^{b} (N(\mathbf{u}_{j}) - M(\mathbf{u}_{j}))
& = & (M(\mathbf{u}_{i}) - N(\mathbf{u}_{i})) - (M(\mathbf{v}) - N(\mathbf{v}))
\end{eqnarray*}
for each $i \in [b]$. By definition of $c_{k}(\mathbf{v})$ and Lemma \ref{lem:0}
we see that
\begin{eqnarray*}
2c_{k}(\mathbf{v}) & = &
\sum\limits_{\stackrel{i,j = 1}{i \not= j}}^{b} 2 M(\mathbf{u}_{i})N(\mathbf{u}_{j})
- \sum\limits_{1 \leq i < j \leq b} 2 \left\{ M(\mathbf{u}_{i})M(\mathbf{u}_{j}) +
N(\mathbf{u}_{i})N(\mathbf{u}_{j}) \right\} \\
& = & \sum\limits_{i=1}^{b} (M(\mathbf{u}_{i}) - N(\mathbf{u}_{i})) \cdot
\left\{ \sum\limits_{\stackrel{j = 1}{j \not= i}}^{b} (N(\mathbf{u}_{j}) - M(\mathbf{u}_{j})) \right\} \\
& = & \sum\limits_{i=1}^{b} (M(\mathbf{u}_{i}) - N(\mathbf{u}_{i})) \cdot
\left\{ (M(\mathbf{u}_{i}) - N(\mathbf{u}_{i})) - (M(\mathbf{v}) - N(\mathbf{v})) \right\} \\
& = & \sum\limits_{i=1}^{b} (M(\mathbf{u}_{i}) - N(\mathbf{u}_{i}))^{2} -
\sum\limits_{i=1}^{b} (M(\mathbf{u}_{i}) - N(\mathbf{u}_{i}))(M(\mathbf{v}) - N(\mathbf{v})) \\
& = & \sum\limits_{i=1}^{b} (M(\mathbf{u}_{i}) - N(\mathbf{u}_{i}))^{2} - (M(\mathbf{v}) - N(\mathbf{v}))^{2} \\
& = & \left\{ \sum\limits_{\mathbf{u} \in \Adj (\mathbf{v})} (M(\mathbf{u}) - N(\mathbf{u}))^{2} \right\} -
(M(\mathbf{v}) - N(\mathbf{v}))^{2}.
\end{eqnarray*}
If $T(\mathbf{v})$ is simplicially balanced, then (by definition) $M(\mathbf{v}) - N(\mathbf{v}) = 0$. The second assertion of the
lemma therefore follows immediately from the preceding calculation.
\end{proof}

Lemma \ref{lem:1} provides a means to determine when $\gamma(p)$ is constant on $(0, \infty)$.

\begin{theorem}\label{thm:3}
The following conditions are equivalent:
\begin{enumerate}
\item[(a)] $\gamma(p)$ is constant on $(0, \infty)$,

\item[(b)] $c_{\ell} = c_{\ell - 1} = \cdots = c_{2} = 0$,

\item[(c)] for each node $\mathbf{v} \in \Pi_{k}$ such that $k \geq 2$ and each $\mathbf{u} \in \Adj (\mathbf{v})$,
$T(\mathbf{u})$ is simplicially balanced, and

\item[(d)] $T(\mathbf{u})$ is simplicially balanced for each node $\mathbf{u} \in \Pi_{1}$ and no
$\{ x_{j} \}$ or $\{ y_{i} \}$ is adjacent to any level $k$ node with $k \geq 2$.
\end{enumerate}
Moreover, if $c_{\ell} = c_{\ell - 1} = \cdots = c_{r + 1} = 0$ for some $r \geq 2$, then
$c_{r}$ $(= c_{\ell} + c_{\ell - 1} + \cdots + c_{r})$ is positive or zero.
\end{theorem}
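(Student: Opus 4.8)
The plan is to deduce all of the claims from Lemma~\ref{lem:1} by summing its right-hand side over well-chosen collections of nodes. Throughout I write $\delta(\mathbf{v}) = M(\mathbf{v}) - N(\mathbf{v})$, so that $T(\mathbf{v})$ is simplicially balanced precisely when $\delta(\mathbf{v}) = 0$; I will use repeatedly that $\delta(\mathbf{v}) = \sum_{\mathbf{u} \in \Adj(\mathbf{v})} \delta(\mathbf{u})$ and that $\delta(\mathbf{r}) = 0$ at the root $\mathbf{r} = X$, since $M(X) = N(X) = 1$. Every non-root node $\mathbf{u}$ is left-adjacent to exactly one node, its \emph{parent}, whose level is strictly larger than that of $\mathbf{u}$. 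I may assume $\ell \geq 2$, the case $\ell = 1$ being immediate (then $\Pi_{1} = \{X\}$, there are no $c_{k}$ with $k \geq 2$, and all four conditions hold).

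\emph{Step 1: a telescoping identity.} Fix $r$ with $2 \leq r \leq \ell$. Summing the identity $2c_{k} = \sum_{\mathbf{v} \in \Pi_{k}} \bigl( \sum_{\mathbf{u} \in \Adj(\mathbf{v})} \delta(\mathbf{u})^{2} - \delta(\mathbf{v})^{2} \bigr)$ from Lemma~\ref{lem:1} over $k = r, r+1, \dots, \ell$, I obtain $2(c_{r} + \dots + c_{\ell}) = A - B$, where $B = \sum \delta(\mathbf{v})^{2}$ runs over all nodes $\mathbf{v}$ of level at least $r$, and $A = \sum \delta(\mathbf{u})^{2}$ runs over all non-root nodes $\mathbf{u}$ whose parent has level at least $r$. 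A non-root node of level at least $r$ has its parent at level at least $r$ as well, so it contributes to both $A$ and $B$; the only contribution to $B$ left unmatched in $A$ is the root term $\delta(\mathbf{r})^{2}$, and the contributions to $A$ left unmatched in $B$ come exactly from the nodes of level below $r$ whose parent has level at least $r$. Hence
\[
2\bigl( c_{r} + c_{r+1} + \dots + c_{\ell} \bigr) \;=\; \Bigl( \sum_{\mathbf{u} \in \mathcal{B}_{r}} \delta(\mathbf{u})^{2} \Bigr) - \delta(\mathbf{r})^{2} \;=\; \sum_{\mathbf{u} \in \mathcal{B}_{r}} \delta(\mathbf{u})^{2},
\]
where $\mathcal{B}_{r}$ denotes the set of nodes of level below $r$ whose parent has level at least $r$. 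In particular $c_{r} + \dots + c_{\ell} \geq 0$ for every $r \geq 2$, which already gives the final (``moreover'') assertion: under the hypothesis $c_{\ell} = \dots = c_{r+1} = 0$ the left-hand side is just $2c_{r}$. (That assertion also admits a short analytic proof, since then $\gamma(p) = \sum_{k \leq r} c_{k}\alpha_{k}^{p}$, so $\gamma(p)/\alpha_{r}^{p} \to c_{r}$ as $p \to \infty$, while $\gamma(p) > 0$ because $(X,d)$ has strict $p$-negative type.)

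\emph{Step 2: the equivalences.} For (a)$\Leftrightarrow$(b): $\gamma(p) = \sum_{k=1}^{\ell} c_{k}\alpha_{k}^{p}$ with $1 = \alpha_{1} < \alpha_{2} < \dots < \alpha_{\ell}$, and the functions $p \mapsto \alpha_{k}^{p}$ are linearly independent on $(0,\infty)$, so $\gamma$ is constant there if and only if $c_{2} = \dots = c_{\ell} = 0$. For (c)$\Rightarrow$(b): if every $\mathbf{u} \in \Adj(\mathbf{v})$ with $\mathbf{v} \in \Pi_{k}$, $k \geq 2$, satisfies $\delta(\mathbf{u}) = 0$, then $\delta(\mathbf{v}) = \sum_{\mathbf{u}} \delta(\mathbf{u}) = 0$ too, and Lemma~\ref{lem:1} gives $c_{k}(\mathbf{v}) = 0$, whence $c_{k} = 0$. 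For (b)$\Rightarrow$(c): Step~1 shows that (b) forces $\sum_{\mathbf{u} \in \mathcal{B}_{r}} \delta(\mathbf{u})^{2} = 0$ for every $r \geq 2$, so every node of every $\mathcal{B}_{r}$ is balanced; and any $\mathbf{u} \in \Adj(\mathbf{v})$ with $\mathbf{v} \in \Pi_{k}$, $k \geq 2$, has level below $k$ and parent $\mathbf{v}$ of level $k$, hence lies in $\mathcal{B}_{k}$. For (c)$\Leftrightarrow$(d): every node of $\Pi_{1}$ is left-adjacent to a node of level at least $2$, so (c) makes each such $T(\mathbf{u})$ balanced, and since a singleton $\{x_{j}\}$ or $\{y_{i}\}$ has $\delta \neq 0$ and so is never balanced, (c) also forces the clause that no singleton is left-adjacent to a level-$\geq 2$ node; this is (d). Conversely, (d) implies (c) by induction on level: that clause makes every left-adjacent node of a level-$\geq 2$ node a non-singleton, hence of level at least $1$, so beginning with the balanced nodes of $\Pi_{1}$ and passing from children to parent through $\delta(\mathbf{v}) = \sum_{\mathbf{u} \in \Adj(\mathbf{v})} \delta(\mathbf{u})$, every node of level at least $1$ is balanced, in particular every node left-adjacent to a level-$\geq 2$ node.

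\emph{Main obstacle.} The substantive step is the telescoping in Step~1. The delicate point is to track precisely which terms $\delta(\mathbf{u})^{2}$ cancel: here one uses that each non-root node has a single parent, that a node of level at least $r$ keeps its parent at level at least $r$, and that the lone surviving boundary term is $\delta(\mathbf{r})^{2} = 0$. Given that identity together with Lemma~\ref{lem:1}, the chain (a)$\Leftrightarrow$(b)$\Leftrightarrow$(c)$\Leftrightarrow$(d) and the sign of $c_{r} + \dots + c_{\ell}$ follow by routine bookkeeping.
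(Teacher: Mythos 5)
Your proof is correct, but it is organized differently from the paper's. For (b)$\Rightarrow$(c) and the final ``moreover'' clause, the paper runs a downward induction on levels: $c_{\ell}=c_{\ell}(\mathbf{r})=0$ forces every $\mathbf{v}\in\Adj(\mathbf{r})$ to be balanced, which makes each $2c_{\ell-1}(\mathbf{v})$ a sum of squares, and so on level by level. You instead prove the global telescoping identity $2(c_{r}+\cdots+c_{\ell})=\sum_{\mathbf{u}}\delta(\mathbf{u})^{2}$, the sum taken over nodes of level below $r$ whose parent has level at least $r$, and read everything off from that. This identity is essentially the content of the paper's Theorem~\ref{thm:4} (which the paper proves immediately afterwards by the same cancellation bookkeeping you carry out), so your route front-loads the stronger statement and obtains Theorem~\ref{thm:3} as a corollary; the trade-off is that you must track the parent/child cancellations carefully once, whereas the paper's induction only ever invokes Lemma~\ref{lem:1} locally. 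Your linear-independence argument for (a)$\Leftrightarrow$(b) and your alternative analytic proof of $c_{r}\geq 0$ via $\gamma(p)/\alpha_{r}^{p}\to c_{r}$ are both fine.

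One small imprecision in your (d)$\Rightarrow$(c): the clause in (d) only forbids the \emph{weighted} singletons $\{x_{j}\}$, $\{y_{i}\}$ from being adjacent to a level-$\geq 2$ node, so a child of such a node need not be ``a non-singleton, hence of level at least $1$'' as you assert --- it may be an unweighted singleton $\{z\}$ of level $0$. This does not damage the argument, since such a node has $M(\{z\})=N(\{z\})=0$ and is therefore trivially balanced (this is exactly how the paper handles it), but the sentence as written is not what the hypothesis gives you and should be repaired.
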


\begin{proof} Conditions (a) and (b) are plainly equivalent.

Suppose that condition (b) holds. The only level $\ell$ node is the root $\mathbf{r}$
of the entire dendrogram tree $T$. This entails that $c_{\ell} = c_{\ell}(\mathbf{r})$. Clearly $T = T(\mathbf{r})$ and
$T$ is simplicially balanced because $M(\mathbf{r}) = 1 = N(\mathbf{r})$ by definition of a simplex. Hence
\begin{eqnarray}\label{cl:0}
2 c_{\ell} & = & \sum\limits_{\mathbf{v} \in \Adj (\mathbf{r})} (M(\mathbf{v}) - N(\mathbf{v}))^{2}
\end{eqnarray}
by Lemma \ref{lem:1}. As we are assuming that $c_{\ell} = 0$ it follows immediately that $M(\mathbf{v}) = N(\mathbf{v})$
for each $\mathbf{v} \in \Adj(\mathbf{r})$. In other words, $T(\mathbf{v})$ is simplicially balanced for each
$\mathbf{v} \in \Adj(\mathbf{r})$. In particular, $T(\mathbf{v})$ is simplicially balanced for each level $\ell - 1$
node $\mathbf{v} \in T$ because all such nodes are adjacent to $\mathbf{r}$. So it follows from
Lemma \ref{lem:1} that
\begin{eqnarray}\label{cl1:0}
2 c_{\ell - 1}(\mathbf{v}) & = &
\sum\limits_{\mathbf{u} \in \Adj (\mathbf{v})} (M(\mathbf{u}) - N(\mathbf{u}))^{2} \\
& \geq & 0 \nonumber
\end{eqnarray}
for each level $\ell - 1$ node $\mathbf{v} \in T$. Now (by definition and assumption) $c_{\ell - 1}
= \sum c_{\ell - 1}(\mathbf{v}) = 0$. Thus $T(\mathbf{u})$ is simplicially balanced for all
$\mathbf{u} \in \Adj (\mathbf{v}), \mathbf{v} \in \Pi_{\ell - 1}$.
Now consider any integer $k \geq 2$ with the following property:
for any node $\mathbf{w}$ in any of the levels $\ell, \ell - 1, \ldots, k+1$
and any $\mathbf{u} \in \Adj(\mathbf{w})$ it is the case that $T(\mathbf{u})$ is simplicially balanced.
Then it must be the case $T(\mathbf{v})$ is simplicially balanced for each level $k$ node $\mathbf{v} \in T$. This is
because for any such node $\mathbf{v} \in T$ there is a unique integer $r > k$ such that $\mathbf{v} \in \Adj(\mathbf{w})$
for some level $r$ node $\mathbf{w} \in T$. It then follows from
Lemma \ref{lem:1} that
\begin{eqnarray*}
2 c_{k}(\mathbf{v}) & = &
\sum\limits_{\mathbf{u} \in \Adj (\mathbf{v})} (M(\mathbf{u}) - N(\mathbf{u}))^{2} \\
& \geq & 0 \nonumber
\end{eqnarray*}
for each level $k$ node $\mathbf{v} \in T$. Now (by definition and assumption) $c_{k}
= \sum c_{k}(\mathbf{v}) = 0$. Thus $T(\mathbf{u})$ is simplicially balanced for all
$\mathbf{u} \in \Adj (\mathbf{v}), \mathbf{v} \in \Pi_{k}$. We conclude that condition (c) holds.

Suppose that condition (c) holds. Condition (b) then follows from the cases $k = \ell, \ell - 1, \ldots, 2$ by
successive applications of Lemma \ref{lem:1}. For example, $c_{\ell} = c_{\ell}(\mathbf{r}) = 0$ by (\ref{cl:0}), then
$c_{\ell - 1} = \sum c_{\ell - 1}(\mathbf{v}) = 0$ by (\ref{cl1:0}), and so on. It is also clear that condition (c)
implies condition (d). This is because every level $1$ node is adjacent to some level $k$ node with $k \geq 2$
and, moreover, no $T(\{ x_{j} \})$ or $T(\{ y_{i} \})$ is simplicially balanced.

Suppose that condition (d) holds.
By way of illustration, consider an arbitrary level $2$ node $\mathbf{v} \in T$. Each node $\mathbf{u} \in \Adj (\mathbf{v})$
is a level $1$ node or a level $0$ node. If $\mathbf{u} \in \Adj (\mathbf{v})$ is a level $1$ node,
then $M(\mathbf{u}) = N(\mathbf{u})$ by hypothesis. If $\mathbf{u} \in \Adj (\mathbf{v})$ is a level $0$ node,
then $\mathbf{u}$ carries no weight (because no $\{ x_{j} \}$ or $\{ y_{i} \}$ is adjacent to any level $2$ node)
and so $M(\mathbf{u}) = 0 = N(\mathbf{u})$. Moreover,
\begin{eqnarray*}
M(\mathbf{v}) & = & \sum\limits_{\mathbf{u} \in \Adj (\mathbf{v}) \cap \Pi_{1}} M(\mathbf{u}) \\
              & = & \sum\limits_{\mathbf{u} \in \Adj (\mathbf{v}) \cap \Pi_{1}} N(\mathbf{u}) \\
              & = & N(\mathbf{v}).
\end{eqnarray*}
Thus $T(\mathbf{v})$ is simplicially balanced for every level $2$ node $\mathbf{v} \in T$. The argument now
proceeds in the obvious fashion to establish condition (c).

The final statement of the theorem also follows by successive applications of Lemma \ref{lem:1}. As $T = T(\mathbf{r})$
is simplicially balanced (because $M(\mathbf{r}) = 1 = N(\mathbf{r})$), Lemma \ref{lem:1} implies that $c_{\ell}$ is positive
or zero. If $c_{\ell} = 0$, then $c_{\ell - 1}(\mathbf{v}) \geq 0$ for each level $\ell - 1$ node $\mathbf{v} \in T$
by (\ref{cl1:0}), and so $c_{\ell - 1} = \sum c_{\ell - 1}(\mathbf{v})$ is positive or zero. If $c_{\ell} = c_{\ell - 1} = 0$,
then $T(\mathbf{v})$ will be simplicially balanced for each level $\ell - 2$ node $\mathbf{v} \in T$ (because any such
node is adjacent to a level $\ell - 1$ node or $\mathbf{r}$), and so $c_{\ell - 2} = \sum c_{\ell - 2}(\mathbf{v}) \geq 0$
by Lemma \ref{lem:1}. And so on.
\end{proof}

The final statement of Theorem \ref{thm:3} may be strengthened considerably.

\begin{theorem}\label{thm:4}
$\sum\limits_{i=k}^{\ell} c_{i} \geq 0$ for each $k \in [\ell]$.
Moreover, $2 (c_{1} + \cdots + c_{\ell}) = m_{1}^{2} + \cdots m_{s}^{2} + n_{1}^{2} + \cdots n_{t}^{2} > 0$.
\end{theorem}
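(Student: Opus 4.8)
The plan is to derive both assertions from the formula $2c_k(\mathbf{v}) = \left\{\sum_{\mathbf{u}\in\Adj(\mathbf{v})}(M(\mathbf{u})-N(\mathbf{u}))^2\right\} - (M(\mathbf{v})-N(\mathbf{v}))^2$ of Lemma \ref{lem:1} by summing it over all nodes $\mathbf{v}$ at levels $k,k+1,\dots,\ell$ and exploiting a telescoping cancellation. First I would write $\sum_{i=k}^{\ell} c_i = \sum_{i=k}^{\ell}\sum_{\mathbf{v}\in\Pi_i} c_i(\mathbf{v})$, so that $2\sum_{i=k}^{\ell} c_i$ equals $\sum$ over all such nodes $\mathbf{v}$ of $\left\{\sum_{\mathbf{u}\in\Adj(\mathbf{v})}(M(\mathbf{u})-N(\mathbf{u}))^2\right\} - (M(\mathbf{v})-N(\mathbf{v}))^2$. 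The key combinatorial observation is that the collection of nodes $\mathbf{v}$ with level at least $k$ together with their left-adjacency relation forms exactly the subtree $T$ pruned below level $k$: every node of level at least $k$ appears once as a ``$\mathbf{v}$'', and it appears as a ``$\mathbf{u}$'' (i.e. in $\Adj(\mathbf{w})$ for some $\mathbf{w}$ in the sum) precisely when its parent also has level at least $k$ — which is automatic for every node of level at least $k$ except the root $\mathbf{r}$ (the unique level $\ell$ node), which has no parent. Meanwhile the terms $(M(\mathbf{u})-N(\mathbf{u}))^2$ appearing with a plus sign are indexed by nodes $\mathbf{u}$ that are left-adjacent to some node of level at least $k$; these are exactly the nodes whose parent has level at least $k$.

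Carrying out the bookkeeping: after telescoping, the positive contribution $+(M(\mathbf{u})-N(\mathbf{u}))^2$ at a node $\mathbf{u}$ and the negative contribution $-(M(\mathbf{u})-N(\mathbf{u}))^2$ (coming from the term $-(M(\mathbf{v})-N(\mathbf{v}))^2$ with $\mathbf{v} = \mathbf{u}$) cancel for every node $\mathbf{u}$ of level at least $k$ whose parent also has level at least $k$ — that is, for every node of level at least $k$ other than the root. What survives is: the $+(M(\mathbf{u})-N(\mathbf{u}))^2$ terms for nodes $\mathbf{u}$ of level strictly less than $k$ whose parent has level at least $k$ (call these the ``frontier'' nodes just below the cut), minus the single term $(M(\mathbf{r})-N(\mathbf{r}))^2 = 0$ for the root. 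Hence $2\sum_{i=k}^{\ell} c_i = \sum_{\mathbf{u}\ \mathrm{frontier}}(M(\mathbf{u})-N(\mathbf{u}))^2 \geq 0$, which is the first assertion. (One should double-check the edge case $k=\ell$, where the only node of level $\ge \ell$ is $\mathbf{r}$ and the frontier is $\Adj(\mathbf{r})$, recovering $2c_\ell = \sum_{\mathbf{v}\in\Adj(\mathbf{r})}(M(\mathbf{v})-N(\mathbf{v}))^2$, consistent with (\ref{cl:0}).)

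For the second assertion, take $k=1$: the frontier nodes just below level $1$ are precisely the level $0$ nodes — i.e. the singletons $\{x_j\}$ and $\{y_i\}$ — since every level $0$ leaf has a parent of level $\ge 1$, and no node of level $\ge 1$ has level $0$. For a singleton $\{x_j\}$ we have $M(\{x_j\}) - N(\{x_j\}) = m_j$ and for $\{y_i\}$ we have $M(\{y_i\}) - N(\{y_i\}) = -n_i$; for a leaf $\{z_r\}$ carrying no weight the difference is $0$. Therefore $2\sum_{i=1}^{\ell} c_i = \sum_{j=1}^{s} m_j^2 + \sum_{i=1}^{t} n_i^2$, which is strictly positive because all $m_j, n_i > 0$.

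The main obstacle is purely the indexing/telescoping argument: making the cancellation rigorous requires care in identifying, for a fixed cut level $k$, exactly which nodes play the role of $\mathbf{v}$ and which play the role of $\mathbf{u}$, and in checking that every node of level $\ge k$ other than $\mathbf{r}$ is left-adjacent to exactly one node of level $\ge k$ (so its two contributions cancel cleanly). This follows from the tree structure of $T$ — each non-root node has a unique parent, and the parent of a node of level $\ge k$ necessarily has strictly larger level, hence also $\ge k$ — but it is the step that needs to be spelled out carefully rather than asserted.
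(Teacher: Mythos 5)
Your proposal is correct and follows essentially the same route as the paper: summing the identity of Lemma \ref{lem:1} over all nodes of level at least $k$ and cancelling each node's lone negative term against the matching positive term in its parent's expression, leaving only the ``frontier'' squares (which for $k=1$ are exactly the leaf terms $m_{j}^{2}$ and $n_{i}^{2}$). The paper phrases the cancellation pairwise rather than as a single closed-form identity for general $k$, but the argument is the same.
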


\begin{proof}
As in (\ref{one}), consider a level $k$ node $\mathbf{v}$ but with the additional assumption that $k \not= \ell$.
Then there is a unique node $\mathbf{w} \in T$ such that $\mathbf{v} \in \Adj (\mathbf{w})$. Now $\mathbf{w}$
is a level $r$ node for some unique $r > k$. According to Lemma \ref{lem:1}, $-(M(\mathbf{v}) - N(\mathbf{v}))^{2}$ is the
lone negative term in the expression (\ref{one}) for $2c_{k}(\mathbf{v})$. On the other hand,
$+(M(\mathbf{v}) - N(\mathbf{v}))^{2}$ is a positive term in the corresponding expression for $2c_{r}(\mathbf{w})$.
So adding $2c_{k}(\mathbf{v})$ to $2c_{r}(\mathbf{w})$ will eliminate the terms $\pm (M(\mathbf{v}) - N(\mathbf{v}))^{2}$
by cancellation. As a result, due to repeated cancellations of this nature, it follows that
\begin{eqnarray*}
c_{k} + c_{k + 1} + \cdots + c_{\ell} & = & \sum \bigl\{ c_{r}(\mathbf{w}) : r \geq k \text{ and } \mathbf{w}
\text{ is a level } r \text{ node in } T \bigl\} \geq 0.
\end{eqnarray*}
Now suppose that $\mathbf{u} \in \Adj (\mathbf{v})$ where, again, $\mathbf{v}$ is a level $k$ node
in $T$ with $k \in [\ell]$. If $\mathbf{u}$ is an internal node of $T$ of level $q$ (say), then $q \in [k-1]$.
In this case, $+(M(\mathbf{u}) - N(\mathbf{u}))^{2}$ appears as a positive term in the expression (\ref{one})
for $2c_{k}(\mathbf{v})$ while $-(M(\mathbf{u}) - N(\mathbf{u}))^{2}$ is the lone negative term in the
corresponding expression for $2c_{q}(\mathbf{u})$. So adding $2c_{k}(\mathbf{v})$ to $2c_{q}(\mathbf{u})$
will once again eliminate the terms $\pm (M(\mathbf{u}) - N(\mathbf{u}))^{2}$ by cancellation. However, we cannot discount
the possibility that $\mathbf{u}$ may be a leaf of $T$. Indeed, if $\mathbf{u}$ is a level $0$ node in
$T$, then the positive term $+(M(\mathbf{u}) - N(\mathbf{u}))^{2}$ in the expression for
$2c_{k}(\mathbf{v})$ is not cancelled in the following sum:
\begin{eqnarray*}
c_{1} + c_{2} + \cdots + c_{\ell} & = & \sum \bigl\{ c_{j}(\mathbf{w}) : r \geq 1 \text{ and } \mathbf{w}
\text{ is a level } r \text{ node in } T \bigl\}.
\end{eqnarray*}
All other terms in the expression for $2c_{k}(\mathbf{v})$ are cancelled by our preceding comments. Thus:
\begin{eqnarray}\label{two}
2(c_{1} + c_{2} + \cdots + c_{\ell})
& = & \sum \bigl\{ (M(\mathbf{u}) - N(\mathbf{u}))^{2} : \mathbf{u} \text{ is a leaf of } T \bigl\} \nonumber \\
& = & \sum \bigl\{ (M(\mathbf{u}) - N(\mathbf{u}))^{2} : \mathbf{u} = \{ x_{j} \} \text{ for some } j
\text{ or } \mathbf{u} = \{ y_{i} \} \text{ for some } i \bigl\} \nonumber \\
& = & m_{1}^{2} + \cdots m_{s}^{2} + n_{1}^{2} + \cdots n_{t}^{2}.
\end{eqnarray}
\end{proof}

\begin{theorem}\label{thm:5}
The function $\gamma(p) = c_{1}\alpha_{1}^{p} + \cdots + c_{\ell}\alpha_{\ell}^{p}$ is constant or strictly increasing
on $(0, \infty)$.
\end{theorem}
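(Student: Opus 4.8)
The plan is to use Abel summation (summation by parts) to rewrite $\gamma(p)$ in a form whose monotonicity is self-evident, feeding in the tail-sum estimates from Theorem~\ref{thm:4}. Recall that $\alpha_{1} = 1 < \alpha_{2} < \cdots < \alpha_{\ell}$. Set $C_{k} \defeq c_{k} + c_{k+1} + \cdots + c_{\ell}$ for $k \in [\ell]$, and put $C_{\ell+1} \defeq 0$. By Theorem~\ref{thm:4} we have $C_{k} \geq 0$ for every $k \in [\ell]$, and $C_{1} = \tfrac{1}{2}(m_{1}^{2} + \cdots + m_{s}^{2} + n_{1}^{2} + \cdots + n_{t}^{2}) > 0$. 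Thus Theorem~\ref{thm:4} controls precisely the tail sums $C_{k}$ rather than the individual coefficients $c_{k}$, which is exactly what an Abel rearrangement needs.

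First I would carry out the summation by parts. Since $c_{k} = C_{k} - C_{k+1}$,
\begin{eqnarray*}
\gamma(p) & = & \sum\limits_{k=1}^{\ell} (C_{k} - C_{k+1})\alpha_{k}^{p}
\;=\; C_{1}\alpha_{1}^{p} + \sum\limits_{k=2}^{\ell} C_{k}\bigl(\alpha_{k}^{p} - \alpha_{k-1}^{p}\bigr),
\end{eqnarray*}
where the boundary term at $k = \ell$ disappears because $C_{\ell+1} = 0$. Using $\alpha_{1} = 1$, this reads $\gamma(p) = C_{1} + \sum_{k=2}^{\ell} C_{k}(\alpha_{k}^{p} - \alpha_{k-1}^{p})$.

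Next I would check that for each fixed $k \in \{2, \ldots, \ell\}$ the function $p \mapsto \alpha_{k}^{p} - \alpha_{k-1}^{p}$ is strictly increasing on $(0,\infty)$. This is a one-line calculus check: its derivative equals $\alpha_{k}^{p}\ln\alpha_{k} - \alpha_{k-1}^{p}\ln\alpha_{k-1}$, which is strictly positive since $\alpha_{k}^{p} > \alpha_{k-1}^{p} > 0$ and $\ln\alpha_{k} > \ln\alpha_{k-1} \geq \ln\alpha_{1} = 0$. Consequently each summand $C_{k}(\alpha_{k}^{p} - \alpha_{k-1}^{p})$ with $k \geq 2$ is either identically zero (when $C_{k} = 0$) or strictly increasing on $(0,\infty)$ (when $C_{k} > 0$). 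Since $\gamma(p)$ is the sum of the $p$-independent positive constant $C_{1}$ with these summands, it is constant when $C_{2} = \cdots = C_{\ell} = 0$ and strictly increasing otherwise. That is the assertion.

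The only genuine obstacle is the opening move — recognising that the tail-sum positivity of Theorem~\ref{thm:4} (not the sign of individual $c_{k}$'s, which may be negative) is precisely what makes an Abel-summation rearrangement succeed; everything after that rearrangement is routine. As a consistency check, the dichotomy produced here agrees with Theorem~\ref{thm:3}: the condition $C_{2} = \cdots = C_{\ell} = 0$ is equivalent to $c_{2} = \cdots = c_{\ell} = 0$.
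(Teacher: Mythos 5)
Your proof is correct and is essentially the paper's argument: both hinge on an Abel (summation-by-parts) rearrangement against the tail sums $c_{k}+\cdots+c_{\ell}\geq 0$ supplied by Theorem~\ref{thm:4}, together with the strict monotonicity of $p\mapsto\alpha_{k}^{p}-\alpha_{k-1}^{p}$ for $k\geq 2$. The only (cosmetic) difference is that you rearrange $\gamma(p)$ itself, whereas the paper rearranges the difference $\gamma(p_{2})-\gamma(p_{1})$ for $0<p_{1}<p_{2}$.
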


\begin{proof} Suppose $0 < p_{1} < p_{2}$. For each $k \in [\ell]$ we set
$\delta_{k} = \alpha_{k}^{p_{2}} - \alpha_{k}^{p_{1}}$. Notice that $\delta_{1} = 0$ because $\alpha_{1} = 1$
but $\delta_{k} > 0$ otherwise. For each $k \geq 2$ the function $g_{k}(p) = \alpha_{k}^{p} - \alpha_{k-1}^{p}$
is strictly increasing on $(0, \infty)$ because $1 \leq \alpha_{k-1} < \alpha_{k}$. As a result,
$\delta_{k} - \delta_{k-1} = (\alpha_{k}^{p_{2}} - \alpha_{k-1}^{p_{2}}) - (\alpha_{k}^{p_{1}} - \alpha_{k-1}^{p_{1}}) > 0$
for each $k \geq 2$. Thus
  \begin{align}\label{three}
   \gamma(p_{2}) - \gamma(p_{1}) = \sum_{k=1}^{\ell} c_{k} \delta_{k}
&= (c_{1} + c_{2} + \dots + c_{\ell}) \delta_{1} \nonumber \\
    & \qquad + (c_{2} + c_{3} + \cdots +c_{\ell}) (\delta_{2} - \delta_{1}) \nonumber \\
    & \qquad\qquad \ddots \nonumber \\
   & \qquad\qquad\qquad +
         (c_{\ell-1} + c_{\ell}) (\delta_{\ell - 1} - \delta_{\ell - 2}) \nonumber \\
   & \qquad\qquad\qquad\qquad + c_{\ell} (\delta_{\ell} - \delta_{\ell - 1})
  \end{align}
is non-negative by Theorem \ref{thm:4}. Now suppose that $\gamma$ is non-constant on $(0, \infty)$.
Then we may consider the largest $r \geq 2$ such that $c_{r} \not= 0$. By Theorem \ref{thm:4},
$c_{r} = c_{r} + c_{r+1} + \cdots + c_{\ell} > 0$. Hence the term
$(c_{r} + c_{r+1} + \cdots + c_{\ell})(\delta_{r} - \delta_{r-1})$, which appears on the
right side of (\ref{three}), is positive. All other terms on the right side of (\ref{three})
are non-negative by Theorem \ref{thm:4}. Thus $\gamma(p_{2}) - \gamma(p_{1}) > 0$ in this instance.
\end{proof}

\begin{corollary}\label{l:bounds}
$\lim\limits_{p \rightarrow 0^{+}} \gamma_{p}(\boldsymbol{\omega})
\geq \vartheta (n)$. In particular, $\Gamma_{X}(p) \geq \vartheta (n)$ for all $p \geq 0$.
\end{corollary}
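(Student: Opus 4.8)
The plan is to read the limit straight off the formula $\gamma(p) = c_{1}\alpha_{1}^{p} + \cdots + c_{\ell}\alpha_{\ell}^{p}$ and then minimise over the weights of the simplex. Since we have normalised $\alpha_{1} = 1$, letting $p \to 0^{+}$ sends each $\alpha_{k}^{p} \to 1$, so
\[
\lim_{p \to 0^{+}} \gamma_{p}(\boldsymbol{\omega}) \;=\; c_{1} + c_{2} + \cdots + c_{\ell}.
\]
By the last identity of Theorem \ref{thm:4}, the right-hand side equals $\tfrac{1}{2}\bigl(m_{1}^{2} + \cdots + m_{s}^{2} + n_{1}^{2} + \cdots + n_{t}^{2}\bigr)$, which depends only on the $M$- and $N$-team weights of $\boldsymbol{\omega}$.

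Next I would bound this sum of squares below using the constraints $m_{1} + \cdots + m_{s} = 1 = n_{1} + \cdots + n_{t}$. By the Cauchy--Schwarz (equivalently power-mean) inequality, $m_{1}^{2} + \cdots + m_{s}^{2} \geq 1/s$ and $n_{1}^{2} + \cdots + n_{t}^{2} \geq 1/t$, so
\[
\lim_{p \to 0^{+}} \gamma_{p}(\boldsymbol{\omega}) \;\geq\; \frac{1}{2}\left( \frac{1}{s} + \frac{1}{t} \right) \;=\; \frac{s+t}{2st}.
\]
The $M$- and $N$-teams consist of $s+t$ pairwise distinct points of $X$, so $2 \leq s + t \leq n$ with $s,t \geq 1$. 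It then remains to observe that over all such integer pairs the quantity $\tfrac{1}{s} + \tfrac{1}{t} = \tfrac{s+t}{st}$ is minimised by taking $s+t$ as large as possible and $s,t$ as equal as possible, i.e.\ $\{s,t\} = \{\lfloor n/2 \rfloor, \lceil n/2 \rceil\}$, giving the value $\lfloor n/2 \rfloor^{-1} + \lceil n/2 \rceil^{-1} = 2\vartheta(n)$ by (\ref{theta}). Hence $\tfrac{1}{2}(\tfrac{1}{s} + \tfrac{1}{t}) \geq \vartheta(n)$, which proves the first assertion.

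For the ``in particular'' clause, recall from Theorem \ref{S5} that $\Gamma_{X}(p) = \inf_{\boldsymbol{\omega}} \gamma_{p}(\boldsymbol{\omega})$. Fixing $p > 0$ and a simplex $\boldsymbol{\omega}$, Theorem \ref{thm:5} says $q \mapsto \gamma_{q}(\boldsymbol{\omega})$ is non-decreasing on $(0,\infty)$, so $\gamma_{p}(\boldsymbol{\omega}) \geq \lim_{q \to 0^{+}} \gamma_{q}(\boldsymbol{\omega}) \geq \vartheta(n)$; at $p = 0$ one checks directly (using $0^{0} = 0$ and $d(x,y)^{0} = 1$ for distinct $x,y$) that $\gamma_{0}(\boldsymbol{\omega}) = \tfrac{1}{2}\bigl(\sum_{j} m_{j}^{2} + \sum_{i} n_{i}^{2}\bigr)$, which is exactly the value of that limit, so the bound also holds at $p = 0$. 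Taking the infimum over $\boldsymbol{\omega}$ gives $\Gamma_{X}(p) \geq \vartheta(n)$ for all $p \geq 0$. The only mildly delicate point is the combinatorial minimisation of $\tfrac{1}{s} + \tfrac{1}{t}$, but this is an elementary convexity/monotonicity argument (for fixed $s+t = m$ the product $st$ is largest at the balanced split, and $m/st$ decreases as $m$ increases along balanced splits), so no real obstacle arises; everything else is substitution into results already in hand.
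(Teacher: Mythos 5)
Your argument is correct and follows essentially the same route as the paper: read off $\lim_{p\to 0^{+}}\gamma_{p}(\boldsymbol{\omega}) = c_{1}+\cdots+c_{\ell} = \tfrac12\bigl(\sum_j m_j^2+\sum_i n_i^2\bigr)$ from Theorem \ref{thm:4}, bound this below by $\tfrac12(s^{-1}+t^{-1}) \geq \vartheta(n)$ using the normalisation constraints and $s+t\leq n$, and then invoke the monotonicity from Theorem \ref{thm:5}. Your extra care with the balanced-split minimisation and the explicit check at $p=0$ only fills in details the paper leaves to the reader.
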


\begin{proof} Recall that $n = |X| > 1$ and $\vartheta(n) \defeq
\frac{1}{2} \left( \left\lfloor \frac{n}{2} \right\rfloor^{-1} + \left\lceil \frac{n}{2} \right\rceil^{-1} \right)$.

The right side of (\ref{two}) is subject to the constraint
$m_{1} + \cdots + m_{s} = 1 = n_{1} + \cdots + n_{t}$ and is easily seen to be at least $s^{-1} + t^{-1}$
by elementary calculus. Moreover, $s^{-1} + t^{-1} \geq 2 \vartheta(n)$ because $s + t \leq n$.
As a result, $$\lim\limits_{p \rightarrow 0^{+}} \gamma_{p}(\boldsymbol{\omega}) = c_{1} + \cdots + c_{\ell}
\geq \vartheta(n).$$ As this inequality holds for any $\boldsymbol{\omega} \in \mathcal{N}$ it
follows from Theorem \ref{thm:5} that $\Gamma_{X}(p) \geq \vartheta (n)$ for all $p \geq 0$.
\end{proof}

It is worth noting that the lower bound on $\Gamma_{X}(p)$ given in the statement of Corollary \ref{l:bounds}
is best possible. In the case of the discrete metric $d$ on $X$ one has $\Gamma_{X}(p) = \vartheta (n)$ for all
$p \geq 0$ by Weston \cite[Theorem 3.2]{We2}.

\section{Flat simplices and the computation of $\Gamma_{X}(\infty)$}\label{sec:5}

Let $(X,d) = (\{ z_{1}, \ldots, z_{n} \}, d)$, $\mathcal{N}$ and $\mathcal{D}$ be as in Section \ref{sec:4}.
We begin by showing that $\Gamma_{X}(\infty)$ exists in the real line.

\begin{theorem}\label{limit:2}
$\Gamma_{X}(p)$ is non-decreasing and bounded above by 1, and hence
$\Gamma_{X}(\infty) \defeq \lim\limits_{p \rightarrow \infty} \Gamma_{X}(p)$ exists.
\end{theorem}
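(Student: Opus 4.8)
The plan is to combine the variational description of $\Gamma_{X}(p)$ from Theorem \ref{S5} with the monotonicity of individual simplex gaps established in Theorem \ref{thm:5}, and then to produce a single explicit simplex witnessing the bound $\Gamma_{X}(p) \leq 1$.

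First I would recall that, under our standing normalization $\alpha_{1} = 1$, Theorem \ref{S5} gives $\Gamma_{X}(p) = \inf_{\boldsymbol{\omega} \in \mathcal{N}} \gamma_{p}(\boldsymbol{\omega})$ for every $p \geq 0$. For each fixed $\boldsymbol{\omega} \in \mathcal{N}$ we have the expansion $\gamma_{p}(\boldsymbol{\omega}) = c_{1}\alpha_{1}^{p} + \cdots + c_{\ell}\alpha_{\ell}^{p}$ (with coefficients $c_{k}$ depending on $\boldsymbol{\omega}$); since every pairwise distance occurring in a simplex is strictly positive, this formula is also valid at $p = 0$, so $p \mapsto \gamma_{p}(\boldsymbol{\omega})$ is a well-defined function on $[0,\infty)$. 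Theorem \ref{thm:5} tells us this function is constant or strictly increasing on $(0,\infty)$, and together with its continuity at $0$ it is therefore non-decreasing on all of $[0,\infty)$. The key observation is then purely order-theoretic: a pointwise infimum of a family of non-decreasing functions is non-decreasing. Explicitly, if $0 \leq p_{1} < p_{2}$, then for every $\boldsymbol{\omega} \in \mathcal{N}$ we have $\Gamma_{X}(p_{1}) \leq \gamma_{p_{1}}(\boldsymbol{\omega}) \leq \gamma_{p_{2}}(\boldsymbol{\omega})$, and taking the infimum over $\boldsymbol{\omega}$ on the right-hand side yields $\Gamma_{X}(p_{1}) \leq \Gamma_{X}(p_{2})$.

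Next I would establish the upper bound. Since $\alpha_{1} = 1$ is the minimum non-zero distance in $(X,d)$ and $|X| > 1$, this value is attained: there exist distinct points $x, y \in X$ with $d(x,y) = 1$. Consider the normalized $(1,1)$-simplex $D = [x(1);y(1)]_{1,1}$. By Definition \ref{S4}, $\gamma_{p}(D) = 1 \cdot 1 \cdot d(x,y)^{p} = 1$ for every $p \geq 0$. Consequently $\Gamma_{X}(p) = \inf_{\boldsymbol{\omega} \in \mathcal{N}} \gamma_{p}(\boldsymbol{\omega}) \leq \gamma_{p}(D) = 1$ for all $p \geq 0$.

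Finally, a function on $[0,\infty)$ that is non-decreasing and bounded above necessarily has a finite limit as $p \to \infty$, namely its supremum, so $\Gamma_{X}(\infty) = \lim_{p \to \infty} \Gamma_{X}(p)$ exists (and in fact lies in $(0,1]$, positivity coming from Corollary \ref{l:bounds}). I do not anticipate a genuine obstacle here; the only points requiring a little care are that the monotonicity of Theorem \ref{thm:5}, stated on the open interval $(0,\infty)$, must be extended to include the endpoint $p = 0$, and that the clean bound $1$ (rather than $\alpha_{1}^{p}$) relies essentially on the normalization carried out at the end of Section \ref{sec:den}.
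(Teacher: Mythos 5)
Your proof is correct and follows essentially the same route as the paper: monotonicity of $\Gamma_{X}(p)$ is deduced from the monotonicity of each $\gamma_{p}(\boldsymbol{\omega})$ (Theorem \ref{thm:5}) via the infimum characterization of Theorem \ref{S5}, and the upper bound comes from the same witness simplex $[x(1);y(1)]_{1,1}$ with $d(x,y)=\alpha_{1}=1$. If anything, your ``infimum of non-decreasing functions is non-decreasing'' step is spelled out more carefully than the paper's version, which routes the same idea through the attainment result of Theorem \ref{gap:evl}.
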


\begin{proof} By Theorem \ref{gap:evl} there is simplex $\gamma_{p}(\boldsymbol{\omega}_{0})$
such that $\Gamma_{X}(p) = \gamma(p):=\gamma_{p}(\boldsymbol{\omega}_{0})$. So, by Theorem \ref{thm:5},
$\Gamma_{X}(p)$ is non-decreasing on $[0,\infty)$. To complete the proof we need only show that $\Gamma_{X}(p)$
is bounded above independently of $p$. This is easy. We may choose distinct $x, y \in X$ with $d(x, y) = 1$.
Then the simplex $\boldsymbol{\omega}_{1} = [x(1); y(1)]_{1,1}$ satisfies $\gamma_{p}(\boldsymbol{\omega}_{1}) = 1$
for all $p > 0$. Hence, by Theorem \ref{S5}, $\Gamma_{X}(p) \leq 1$ for all $p > 0$.
\end{proof}

\begin{remark}\label{rem:1}
Theorem \ref{limit:2} is specific to finite metric spaces that are ultrametric.
No such statement holds for finite metric spaces that are not ultrametric. Indeed, if $(Y, \rho)$ is a finite
metric space that is not ultrametric, then there is a real number $p_{0} > 0$
such that $\Gamma_{Y}(p_{0}) = 0$. This follows from Faver \textit{et al}.\ \cite[Corollary 5.3]{Fav}, and Li
and Weston \cite[Corollary 4.3]{Hli}. As $\Gamma_{Y}(0) > 0$ by Weston \cite[Theorem 3.2]{We2}, we see that
$\Gamma_{Y}(p)/ \alpha^{p}$, where $\alpha$ is the minimum non-zero distance in $(Y, \rho)$, cannot be
non-decreasing on $[0, p_{0}]$.
\end{remark}

We will see presently that the value of $\Gamma_{X}(\infty)$ is determined by the following class
of simplices.

\begin{definition}\label{flat:s}
A simplex $\boldsymbol{\omega}$ in $(X,d)$ is said to be \textit{flat} if
$\gamma(p) \defeq \gamma_{p}(\boldsymbol{\omega})$ is constant on $(0, \infty)$.
\end{definition}

We let $\mathcal{K} \defeq
\{ \boldsymbol{\omega} \in \mathcal{N} : \boldsymbol{\omega} \text{ is a flat simplex in } (X,d) \}$.
The set $\mathcal{K}$ plays a key role in computing $\Gamma_{X}(\infty)$.

\begin{lemma}
$\mathcal{K}$ is a closed set in $\mathbb{R}^{n}$.
\end{lemma}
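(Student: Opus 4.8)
The plan is to show that $\mathcal{K}$ is closed by exhibiting it as the zero set of finitely many continuous functions on the compact space $\mathcal{N}$. The key observation is that flatness of a simplex $\boldsymbol{\omega}$ is, by Definition \ref{flat:s}, the condition that $\gamma(p) = c_{1}\alpha_{1}^{p} + \cdots + c_{\ell}\alpha_{\ell}^{p}$ is constant on $(0,\infty)$, and by Theorem \ref{thm:3} this is equivalent to the purely algebraic condition $c_{2} = c_{3} = \cdots = c_{\ell} = 0$ (note $c_1$ is unconstrained since $\alpha_1^p = 1$). Since each $c_{k}$ is a fixed polynomial in the coordinates of $\boldsymbol{\omega}$ — indeed $c_{k} = \sum_{\mathbf{v} \in \Pi_{k}} c_{k}(\mathbf{v})$, and each $c_{k}(\mathbf{v})$ is a quadratic polynomial in the block sums $M(\mathbf{u}), N(\mathbf{u})$, which are themselves linear in the $m_j$ and $n_i$, and hence linear in the entries $\omega_k$ — the map $\boldsymbol{\omega} \mapsto (c_{2}(\boldsymbol{\omega}), \ldots, c_{\ell}(\boldsymbol{\omega}))$ is continuous (in fact polynomial) on $\mathcal{N}$.

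Concretely, first I would recall from Remark \ref{compact} that $\mathcal{N} \subseteq \mathbb{R}^{n}$ is a compact metric space, and that each $\boldsymbol{\omega} \in \mathcal{N}$ determines a simplex whose weights $m_j, n_i$ are the absolute values of the corresponding nonzero coordinates $\omega_k$. Thus for a fixed node $\mathbf{v} \in T$, the block sum $M(\mathbf{v}) = \sum_{j : \{x_j\} \in T(\mathbf{v})} m_j$ equals $\sum \{\omega_k : \omega_k > 0 \text{ and } \{z_k\} \in T(\mathbf{v})\}$, which — since the index set depends only on the tree $T$, not on $\boldsymbol{\omega}$ — is a continuous (piecewise-linear) function of $\boldsymbol{\omega}$; similarly for $N(\mathbf{v})$. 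Hence each $c_{k}(\mathbf{v})$, being a polynomial in these quantities, is continuous on $\mathcal{N}$, and therefore so is each $c_{k} = \sum_{\mathbf{v} \in \Pi_{k}} c_{k}(\mathbf{v})$.

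Then I would invoke the equivalence of conditions (a) and (b) in Theorem \ref{thm:3}: $\boldsymbol{\omega} \in \mathcal{K}$ if and only if $c_{2}(\boldsymbol{\omega}) = \cdots = c_{\ell}(\boldsymbol{\omega}) = 0$. Therefore
\begin{eqnarray*}
\mathcal{K} & = & \bigcap_{k=2}^{\ell} c_{k}^{-1}(\{0\}),
\end{eqnarray*}
a finite intersection of preimages of the closed set $\{0\}$ under continuous maps, hence closed in $\mathcal{N}$. Since $\mathcal{N}$ is itself closed in $\mathbb{R}^{n}$ (being compact), $\mathcal{K}$ is closed in $\mathbb{R}^{n}$.

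I do not anticipate a serious obstacle here; the argument is essentially bookkeeping. The only point requiring a little care is justifying continuity of the block sums $M(\mathbf{v}), N(\mathbf{v})$ as functions of $\boldsymbol{\omega}$, since the assignment of a coordinate $\omega_k$ to the $M$-team versus the $N$-team switches across the coordinate hyperplanes $\{\omega_k = 0\}$. But across such a hyperplane the contributing term is $0$ on either side, so the functions $M(\mathbf{v})(\boldsymbol{\omega}) = \sum_k \max(\omega_k, 0) \cdot [\{z_k\} \in T(\mathbf{v})]$ and $N(\mathbf{v})(\boldsymbol{\omega}) = \sum_k \max(-\omega_k, 0) \cdot [\{z_k\} \in T(\mathbf{v})]$ are globally continuous; this settles the matter.
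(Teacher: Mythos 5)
Your proof is correct and takes essentially the same approach as the paper: both invoke Theorem \ref{thm:3} to recast flatness as finitely many equations in continuous (piecewise-linear or polynomial) functions of $\boldsymbol{\omega}$, whence $\mathcal{K}$ is a closed subset of the compact set $\mathcal{N}$. The only cosmetic difference is that the paper uses the equivalent condition (d) of Theorem \ref{thm:3} (balance of the level-one block sums and vanishing of non-coterie weights, which are piecewise-linear conditions), whereas you use condition (b) (vanishing of $c_{2},\ldots,c_{\ell}$, which are quadratic in those block sums); your extra care with the continuity of $M(\mathbf{v})$ and $N(\mathbf{v})$ across the hyperplanes $\{\omega_{k}=0\}$ is a welcome detail the paper leaves implicit.
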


\begin{proof}
By Theorem \ref{thm:3}, $\boldsymbol{\omega} \in \mathbb{R}^{n}$ belongs to $\mathcal{K}$ if and only if
$\sum \omega_{j} = 0$, $\sum |\omega_{j}| = 2$, $M(\mathbf{u}) = N(\mathbf{u})$ for each $\mathbf{u} \in \Pi_{1}$,
and all non-coterie weights are $0$. So $\boldsymbol{\omega}$ needs to satisfy a finite number of equations.
Thus $\mathcal{K}$ is closed in $\mathbb{R}^{n}$.
\end{proof}

\begin{theorem}\label{p=1}
$\Gamma_{X}(\infty) = \inf\limits_{\boldsymbol{\omega} \in \mathcal{K}} \gamma_{1}(\boldsymbol{\omega})$.
\end{theorem}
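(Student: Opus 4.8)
The plan is to establish both inequalities $\Gamma_{X}(\infty) \leq \inf_{\boldsymbol{\omega} \in \mathcal{K}} \gamma_{1}(\boldsymbol{\omega})$ and $\Gamma_{X}(\infty) \geq \inf_{\boldsymbol{\omega} \in \mathcal{K}} \gamma_{1}(\boldsymbol{\omega})$ separately. For the upper bound, fix any flat simplex $\boldsymbol{\omega} \in \mathcal{K}$. By Definition \ref{flat:s}, $\gamma_{p}(\boldsymbol{\omega})$ is constant on $(0,\infty)$, and by continuity (via (\ref{continuity})) this constant value equals $\gamma_{1}(\boldsymbol{\omega})$. Since $\Gamma_{X}(p) = \inf_{\boldsymbol{\omega}' \in \mathcal{N}} \gamma_{p}(\boldsymbol{\omega}') \leq \gamma_{p}(\boldsymbol{\omega}) = \gamma_{1}(\boldsymbol{\omega})$ for every $p > 0$ by Theorem \ref{S5}, letting $p \to \infty$ gives $\Gamma_{X}(\infty) \leq \gamma_{1}(\boldsymbol{\omega})$; taking the infimum over $\mathcal{K}$ finishes this direction.

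For the lower bound I would argue by contradiction, or more directly by a compactness-plus-limiting argument. For each $p > 0$, Theorem \ref{gap:evl} produces a simplex $\boldsymbol{\omega}_{p} \in \mathcal{N}$ with $\Gamma_{X}(p) = \gamma_{p}(\boldsymbol{\omega}_{p})$. Take a sequence $p_{k} \to \infty$; by compactness of $\mathcal{N}$ (Remark \ref{compact}) we may pass to a subsequence along which $\boldsymbol{\omega}_{p_{k}} \to \boldsymbol{\omega}_{\infty} \in \mathcal{N}$. Writing $\gamma_{p}(\boldsymbol{\omega}_{p_{k}}) = c_{1}(\boldsymbol{\omega}_{p_{k}})\alpha_{1}^{p} + \cdots + c_{\ell}(\boldsymbol{\omega}_{p_{k}})\alpha_{\ell}^{p}$, the key point is that the coefficients $c_{j}$ depend continuously (indeed polynomially) on $\boldsymbol{\omega}$, so $c_{j}(\boldsymbol{\omega}_{p_{k}}) \to c_{j}(\boldsymbol{\omega}_{\infty})$. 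Because $\alpha_{1} = 1 < \alpha_{2} < \cdots < \alpha_{\ell}$, the quantity $\Gamma_{X}(p_{k}) = \gamma_{p_{k}}(\boldsymbol{\omega}_{p_{k}})$ stays bounded above by $1$ (Theorem \ref{limit:2}) only if the coefficients of the dominant powers vanish in the limit: one shows $c_{j}(\boldsymbol{\omega}_{\infty}) = 0$ for all $j \geq 2$. Indeed if $c_{r}(\boldsymbol{\omega}_{\infty}) \neq 0$ for the largest such $r \geq 2$, then by Theorem \ref{thm:4} the partial tail sum $c_{r} + \cdots + c_{\ell}$ evaluated at $\boldsymbol{\omega}_{\infty}$ is strictly positive, forcing $\gamma_{p_{k}}(\boldsymbol{\omega}_{p_{k}}) \to \infty$ along the subsequence, contradicting boundedness. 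Hence by Theorem \ref{thm:3}, $\boldsymbol{\omega}_{\infty}$ is a flat simplex, i.e.\ $\boldsymbol{\omega}_{\infty} \in \mathcal{K}$, and $\gamma_{1}(\boldsymbol{\omega}_{\infty}) = c_{1}(\boldsymbol{\omega}_{\infty})$. Finally, by Theorem \ref{thm:5} the function $p \mapsto \gamma_{p}(\boldsymbol{\omega}_{p_{k}})$ is non-decreasing, so $\gamma_{p_{k}}(\boldsymbol{\omega}_{p_{k}}) \geq \gamma_{1}(\boldsymbol{\omega}_{p_{k}}) = c_{1}(\boldsymbol{\omega}_{p_{k}}) + c_{2}(\boldsymbol{\omega}_{p_{k}}) + \cdots$; one needs to pass to the limit carefully to conclude $\Gamma_{X}(\infty) = \lim \Gamma_{X}(p_{k}) \geq c_{1}(\boldsymbol{\omega}_{\infty}) = \gamma_{1}(\boldsymbol{\omega}_{\infty}) \geq \inf_{\boldsymbol{\omega} \in \mathcal{K}} \gamma_{1}(\boldsymbol{\omega})$.

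The main obstacle I anticipate is the lower bound, specifically controlling the limit of $\gamma_{p_{k}}(\boldsymbol{\omega}_{p_{k}})$ as both the exponent $p_{k}$ and the simplex $\boldsymbol{\omega}_{p_{k}}$ vary simultaneously. The clean way to handle this is to separate the two limits: first use the uniform upper bound $\Gamma_{X}(p) \leq 1$ to force $\boldsymbol{\omega}_{\infty}$ into $\mathcal{K}$ (this is where the distinctness and ordering of the $\alpha_{j}$, together with Theorem \ref{thm:4}, do the heavy lifting), and then use that on $\mathcal{K}$ the gap $\gamma_{p}$ does not depend on $p$ at all, so the limiting value is just $\gamma_{1}(\boldsymbol{\omega}_{\infty})$. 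A secondary technical point worth stating explicitly is that the coefficient maps $\boldsymbol{\omega} \mapsto c_{j}(\boldsymbol{\omega})$ are continuous on $\mathcal{N}$; this follows since each $c_{j}$ is a finite sum of products of the block sums $M(\mathbf{u}), N(\mathbf{u})$, which are themselves linear in the coordinates of $\boldsymbol{\omega}$ (on each orthant of $\mathcal{N}$, where the $M$/$N$-team assignment is locally constant). One should also remark that the infimum over $\mathcal{K}$ is actually attained — $\mathcal{K}$ is closed and bounded, hence compact, and $\gamma_{1}$ is continuous — which will be convenient downstream even though it is not strictly needed for the equality itself.
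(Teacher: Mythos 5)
Your proof is correct and follows the same overall strategy as the paper's: the upper bound is immediate from Theorem \ref{S5} together with the fact that $\gamma_{p}$ is $p$-independent on $\mathcal{K}$, and the lower bound is obtained by extracting, via compactness of $\mathcal{N}$, a limit $\boldsymbol{\omega}_{\infty}$ of minimizers $\boldsymbol{\omega}_{p_{k}}$ and showing $\boldsymbol{\omega}_{\infty} \in \mathcal{K}$. The two arguments differ only in execution. To show $\boldsymbol{\omega}_{\infty}$ is flat, the paper avoids the coefficient maps $\boldsymbol{\omega} \mapsto c_{j}(\boldsymbol{\omega})$ entirely: it picks a single exponent $q$ with $\gamma_{q}(\boldsymbol{\omega}_{\infty}) = 2$, uses continuity of $\gamma_{q}(\cdot)$ (clear from (\ref{continuity})) to get $\gamma_{q} > 1$ on a ball about $\boldsymbol{\omega}_{\infty}$, and then monotonicity in $p$ (Theorem \ref{thm:5}) to contradict $\Gamma_{X}(p_{k}) \leq 1$. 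Your route via continuity of the $c_{j}$ and the tail sums of Theorem \ref{thm:4} also works, but the step ``forcing $\gamma_{p_{k}}(\boldsymbol{\omega}_{p_{k}}) \to \infty$'' needs one more line: the individual $c_{j}(\boldsymbol{\omega}_{p_{k}})$ may be negative, and $c_{j}(\boldsymbol{\omega}_{p_{k}}) \to 0$ for $j > r$ does not by itself control $c_{j}(\boldsymbol{\omega}_{p_{k}})\alpha_{j}^{p_{k}}$, so the naive ``dominant coefficient'' reading would not be rigorous. The fix is to sum by parts as in (\ref{three}), writing $\gamma_{p_{k}}(\boldsymbol{\omega}_{p_{k}})$ as a sum of non-negative terms and bounding it below by $\bigl(c_{r} + \cdots + c_{\ell}\bigr)(\boldsymbol{\omega}_{p_{k}}) \cdot (\alpha_{r}^{p_{k}} - \alpha_{r-1}^{p_{k}}) \to \infty$. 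Your closing inequality $\Gamma_{X}(\infty) = \lim \gamma_{p_{k}}(\boldsymbol{\omega}_{p_{k}}) \geq \lim \gamma_{1}(\boldsymbol{\omega}_{p_{k}}) = \gamma_{1}(\boldsymbol{\omega}_{\infty})$ (monotonicity plus continuity of $\gamma_{1}$) is in fact leaner than the paper's $\varepsilon$-argument, which establishes the exact limit $\gamma_{p_{k}}(\boldsymbol{\omega}_{p_{k}}) \to \gamma_{1}(\boldsymbol{\omega}_{\infty})$ when only the one-sided bound is needed. One small slip: $\gamma_{1}(\boldsymbol{\omega}) = \sum_{j} c_{j}\alpha_{j}$, not $\sum_{j} c_{j}$ (the latter is $\lim_{p \to 0^{+}}\gamma_{p}(\boldsymbol{\omega})$); this does not affect the argument, and the two agree at the flat limit point $\boldsymbol{\omega}_{\infty}$.
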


\begin{proof} First of all notice that
by Theorem \ref{S5} and Definition \ref{flat:s} we have
\begin{eqnarray}\label{up:bnd}
\Gamma_{X}(\infty)
&  =   & \lim\limits_{p \rightarrow \infty} \Gamma_{X}(p) \nonumber \\
&  =   & \lim\limits_{p \rightarrow \infty} \inf\limits_{\boldsymbol{\omega}
         \in \mathcal{N}} \gamma_{p}(\boldsymbol{\omega}) \nonumber \\
& \leq & \lim\limits_{p \rightarrow \infty} \inf\limits_{\boldsymbol{\omega}
         \in \mathcal{K}} \gamma_{p}(\boldsymbol{\omega}) \nonumber \\
&  =   & \inf\limits_{\boldsymbol{\omega} \in \mathcal{K}} \gamma_{1}(\boldsymbol{\omega}).
\end{eqnarray}
The switch to $p = 1$ in the last line of (\ref{up:bnd}) occurs because $\mathcal{K}$ is the collection of flat simplices.
We may choose an increasing sequence of real numbers $(p_{k})$ with $p_{1} = 1$ and $\lim p_{k} = \infty$.
For each $k \in \mathbb{N}$ we may, by Theorem \ref{gap:evl}, choose a simplex
$\boldsymbol{\omega}_{k} \in \mathcal{N}$ so that $\Gamma_{X}(p_{k}) = \gamma_{p_{k}}(\boldsymbol{\omega}_{k})$.
Now $(\boldsymbol{\omega}_{k})$ has a convergent subsequence because $\mathcal{N}$ is a compact metric space.
Without loss of generality we may assume that we have passed to such a subsequence and that the limit is
$\boldsymbol{\omega}_{\infty} \in \mathcal{N}$.

We claim that $\boldsymbol{\omega}_{\infty} \in \mathcal{K}$. If we assume, to the contrary, that $\boldsymbol{\omega}_{\infty}
\notin \mathcal{K}$, then $\gamma_{p}(\boldsymbol{\omega}_{\infty})$ is a strictly increasing function of $p$
(whose limit at $\infty$ is $\infty$) by Theorem \ref{thm:5}. Thus, by continuity, there exists a $q \in (0, \infty)$
such that $\gamma_{q}(\boldsymbol{\omega}_{\infty}) = 2$. Moreover, as $\gamma_{q}(\cdot)$ is continuous in
$\boldsymbol{\omega}$, there must exist some ball $B \subset \mathcal{N} \setminus \mathcal{K}$, centered at
$\boldsymbol{\omega}_{\infty}$ with positive radius, such that $\gamma_{q}(\boldsymbol{\omega}) > 1$
for all $\boldsymbol{\omega} \in B$. Then, by Theorem \ref{thm:5}, $\gamma_{p}(\boldsymbol{\omega}) > 1$
for all $p \geq q$ and all $\boldsymbol{\omega} \in B$.
As $(\boldsymbol{\omega}_{k})$ converges to $\boldsymbol{\omega}_{\infty}$, we may choose an integer
an $M > 0$ so that $(\boldsymbol{\omega}_{k})_{k \geq M} \subset B$. It follows that
$\gamma_{p_{k}}(\boldsymbol{\omega}_{k}) > 1$ for all sufficiently large $k$. However, we have chosen each
$\boldsymbol{\omega}_{k}$ so that $\gamma_{p_{k}}(\boldsymbol{\omega}_{k}) = \Gamma_{X}(p_{k})$.
This leads to a contradiction because each $\Gamma_{X}(p_{k}) \leq 1$ by Theorem \ref{limit:2}. We conclude
that $\boldsymbol{\omega}_{\infty} \in \mathcal{K}$.

By definition of $\mathcal{K}$, $\gamma_{1}(\boldsymbol{\omega}_{\infty}) = \gamma_{p}(\boldsymbol{\omega}_{\infty})$
for all $p > 0$. We claim that
\begin{eqnarray}\label{1:lim}
\lim\limits_{k \rightarrow \infty} \gamma_{p_{k}}(\boldsymbol{\omega}_{k}) & = & \gamma_{1}(\boldsymbol{\omega}_{\infty}).
\end{eqnarray}
Let $\epsilon > 0$ be given. The function $\gamma_{1}(\cdot)$ is continuous in $\boldsymbol{\omega}$. Hence there
exists an $r > 0$ such that if $\boldsymbol{\omega}$ is in the ball $B_{\boldsymbol{\omega}_{\infty}}(r)$, then
$| \gamma_{1}(\boldsymbol{\omega}_{\infty}) - \gamma_{1}(\boldsymbol{\omega}) | < \epsilon$ and so, in particular,
$\gamma_{1}(\boldsymbol{\omega}_{\infty}) - \gamma_{1}(\boldsymbol{\omega}) < \epsilon$. For each fixed
$\boldsymbol{\omega} \in \mathcal{N}$, $\gamma_{p}(\boldsymbol{\omega})$ is a non-decreasing function of $p$ by
Theorem \ref{thm:5}. Thus, for all $p \geq 1$ and all $\boldsymbol{\omega} \in B_{\boldsymbol{\omega}_{\infty}}(r)$,
we see that $\gamma_{1}(\boldsymbol{\omega}_{\infty}) - \gamma_{p}(\boldsymbol{\omega}) \leq
\gamma_{1}(\boldsymbol{\omega}_{\infty}) - \gamma_{1}(\boldsymbol{\omega}) < \epsilon$. As
$(\boldsymbol{\omega}_{k})$ converges to $\boldsymbol{\omega}_{\infty}$, we may choose an integer $N > 0$ so
that $(\boldsymbol{\omega}_{k})_{k \geq N} \subset B_{\boldsymbol{\omega}_{\infty}}(r)$. Hence, for $k \geq N$, we have
$\gamma_{1}(\boldsymbol{\omega}_{\infty}) - \gamma_{p_{k}}(\boldsymbol{\omega}_{k}) \leq
\gamma_{1}(\boldsymbol{\omega}_{\infty}) - \gamma_{1}(\boldsymbol{\omega}_{k}) < \epsilon$. On the other hand,
each $\boldsymbol{\omega}_{k}$ is chosen so that
\begin{eqnarray*}
\gamma_{p_{k}}(\boldsymbol{\omega}_{k}) = \Gamma_{X}(p_{k}) = \inf\limits_{\boldsymbol{\omega} \in \mathcal{N}}
\gamma_{p_{k}}(\boldsymbol{\omega}).
\end{eqnarray*}
Hence $\gamma_{1}(\boldsymbol{\omega}_{\infty}) - \gamma_{p_{k}}(\boldsymbol{\omega}_{k}) =
\gamma_{p_{k}}(\boldsymbol{\omega}_{\infty}) - \gamma_{p_{k}}(\boldsymbol{\omega}_{k}) \geq 0$. So we have that
$0 \leq \gamma_{1}(\boldsymbol{\omega}_{\infty}) - \gamma_{p_{k}}(\boldsymbol{\omega}_{k}) < \epsilon$ for all
$k \geq N$. As $\epsilon > 0$ was arbitrary, this establishes (\ref{1:lim}). Thus,
\begin{eqnarray}\label{low:bnd}
\Gamma_{X}(\infty)
&  =   & \lim\limits_{p \rightarrow \infty} \Gamma_{X}(p) \nonumber \\
&  =   & \lim\limits_{k \rightarrow \infty} \Gamma_{X}(p_{k}) \nonumber \\
&  =   & \lim\limits_{k \rightarrow \infty} \gamma_{p_{k}}(\boldsymbol{\omega}_{k}) \nonumber \\
&  =   & \gamma_{1}(\boldsymbol{\omega}_{\infty}) \nonumber \\
& \geq & \inf\limits_{\boldsymbol{\omega} \in \mathcal{K}} \gamma_{1}(\boldsymbol{\omega}).
\end{eqnarray}
The theorem now follows from (\ref{up:bnd}) and (\ref{low:bnd}).
\end{proof}

\begin{theorem}\label{X}
Let $B_{1}, B_{2}, \ldots, B_{l}$ denote the distinct coteries of $(X,d)$. Then,
\begin{eqnarray*}
\Gamma_{X}(\infty) & = &
\left\{ \vartheta (|B_{1}|)^{-1} + \cdots + \vartheta (|B_{l}|)^{-1} \right\}^{-1}.
\end{eqnarray*}
\end{theorem}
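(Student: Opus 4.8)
The plan is to compute the infimum in Theorem \ref{p=1} directly, so the task reduces to a finite-dimensional optimization over the set $\mathcal{K}$ of flat simplices. By Theorem \ref{thm:3}(d), a simplex $\boldsymbol{\omega}$ is flat precisely when no leaf $\{x_j\}$ or $\{y_i\}$ is adjacent to a node of level $\geq 2$ (so all weight lives inside coteries) and $T(\mathbf{u})$ is simplicially balanced for every coterie $\mathbf{u} \in \Pi_1$. Thus a flat simplex decomposes as a disjoint union of ``local'' simplices, one supported on each coterie $B_r$ that carries weight, with the $M$-team weight and $N$-team weight on that $B_r$ being equal, say to $\lambda_r > 0$, and $\sum_r \lambda_r = 1$. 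First I would write $\gamma_1(\boldsymbol{\omega})$ in terms of this decomposition: since any two points in distinct coteries are at distance $\geq \alpha_2 > 1$ while the relevant formula only sees distances, and since within a single coterie all nonzero distances equal $\alpha_1 = 1$, Lemma \ref{lem:1} gives $2\gamma_1(\boldsymbol{\omega}) = \sum_r \sum_{\mathbf{u} \in \Adj(B_r)} (M(\mathbf{u}) - N(\mathbf{u}))^2$, where the inner sum ranges over the singleton children of $B_r$. Wait --- more carefully, I must account for the cross terms between distinct coteries in $\gamma_1$; these contribute via $c_k$ for $k \geq 2$, but for a flat simplex those all vanish by Theorem \ref{thm:3}(b), so indeed $\gamma_1(\boldsymbol{\omega}) = c_1 = \sum_{\mathbf{v} \in \Pi_1} c_1(\mathbf{v})$, and $2c_1(B_r) = \sum_{\mathbf{u} \in \Adj(B_r)}(M(\mathbf{u})-N(\mathbf{u}))^2$ by the simplicially-balanced case of Lemma \ref{lem:1}.

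Next, I would optimize coterie by coterie. Fix a coterie $B_r$ with $|B_r| = b_r$, and suppose it carries total $M$-weight and $N$-weight each equal to $\lambda_r$. The contribution $2c_1(B_r) = \sum_{x \in B_r \cap M\text{-team}} m_x^2 + \sum_{y \in B_r \cap N\text{-team}} n_y^2$ subject to the weights summing to $\lambda_r$ on each team and the teams being disjoint subsets of $B_r$; this is exactly $\lambda_r^2$ times the minimum of $\sum m'^2 + \sum n'^2$ over normalized configurations inside $B_r$, which by the elementary calculus argument already used in the proof of Corollary \ref{l:bounds} equals $\lambda_r^2(s_r^{-1} + t_r^{-1})$ with $s_r + t_r \leq b_r$, minimized at $s_r + t_r = b_r$, $|s_r - t_r| \leq 1$, giving $\lambda_r^2 \cdot 2\vartheta(b_r)$. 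So for a flat simplex supported on coteries indexed by a subset $S$, $\gamma_1(\boldsymbol{\omega}) \geq \sum_{r \in S} \lambda_r^2 \vartheta(b_r)$ with $\sum_{r \in S}\lambda_r = 1$, and this lower bound is attained by a genuine flat simplex (choosing the balanced split in each $B_r$). Minimizing $\sum_{r \in S} \lambda_r^2 \vartheta(b_r)$ over the simplex $\{\lambda_r > 0, \sum \lambda_r = 1\}$ is a standard Lagrange multiplier / Cauchy--Schwarz computation: the minimum is $\left(\sum_{r \in S}\vartheta(b_r)^{-1}\right)^{-1}$, attained at $\lambda_r \propto \vartheta(b_r)^{-1}$.

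Finally I would observe that enlarging $S$ can only decrease this quantity, since $\left(\sum_{r \in S}\vartheta(b_r)^{-1}\right)^{-1}$ is decreasing as more positive terms are added to the denominator; hence the overall infimum over all flat simplices is achieved by taking $S = \{1, \ldots, l\}$, the full set of coteries, yielding
\begin{eqnarray*}
\Gamma_X(\infty) \;=\; \inf_{\boldsymbol{\omega} \in \mathcal{K}} \gamma_1(\boldsymbol{\omega})
\;=\; \left\{ \vartheta(|B_1|)^{-1} + \cdots + \vartheta(|B_l|)^{-1} \right\}^{-1}.
\end{eqnarray*}
One should double-check that the configuration realizing this value --- balanced split in each coterie, weights $\lambda_r \propto \vartheta(|B_r|)^{-1}$ --- genuinely lies in $\mathcal{N}$ (total weight $\sum |\omega_k| = 2$) and in $\mathcal{K}$ (all conditions of Theorem \ref{thm:3}(d) hold because every coterie is simplicially balanced and no weight sits on a leaf adjacent to a higher level), so the infimum is in fact attained.

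\textbf{Main obstacle.} The routine parts are the two nested optimizations (within a coterie, then across coteries), both of which are convexity/Lagrange arguments of a type already deployed in Corollary \ref{l:bounds}. The step requiring the most care is the bookkeeping that translates the structural characterization of flat simplices (Theorem \ref{thm:3}(d)) into the clean statement ``$\gamma_1$ of a flat simplex is a weighted sum of independent per-coterie contributions $\lambda_r^2 \cdot (\text{intra-coterie energy})$'' --- in particular verifying that the cross-coterie distance terms contribute nothing to $\gamma_1$ for a flat simplex, and that one is free to choose the support set $S$ and the split within each coterie independently. Getting this reduction stated correctly, and confirming that the extremal configuration lies in $\mathcal{K}$, is where the argument must be watertight.
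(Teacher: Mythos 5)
Your proposal is correct and follows essentially the same route as the paper: reduce to $\inf_{\boldsymbol{\omega}\in\mathcal{K}}\gamma_{1}(\boldsymbol{\omega})$ via Theorem \ref{p=1}, use Theorem \ref{thm:3} to decompose a flat simplex into simplicially balanced per-coterie pieces with $\gamma_{1}(\boldsymbol{\omega})=\sum_{r} c_{1}(B_{r})$, and then perform the two nested quadratic optimizations. The only cosmetic difference is that the paper obtains the intra-coterie minimum $\vartheta(|B_{r}|)$ by citing Weston's formula $\Gamma_{B_{r}}(1)=\Gamma_{B_{r}}(0)=\vartheta(|B_{r}|)$ for the discrete metric, whereas you re-derive it directly with the calculus argument from Corollary \ref{l:bounds}; both are fine.
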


\begin{proof}
For each coterie $B_{k}$ there is a unique node $\mathbf{v}_{k} \in \Pi_{1}$ such that $B_{k} = \Adj (\mathbf{v}_{k})$,
and conversely. Moreover, the inherited metric on each coterie $B_{k}$ is simply the discrete metric on $B_{k}$.
This is because we are assuming that the minimum non-zero distance in our finite ultrametric space $(X,d)$ is
$\alpha_{1} = 1$. As a result, $\Gamma_{B_{k}}(1) = \Gamma_{B_{k}}(0) = \vartheta(|B_{k}|)$ for each $k \in [l]$ by
Weston \cite[Theorem 3.2]{We2}. This fact will be used below.

By Theorem \ref{thm:3}, $\boldsymbol{\omega} = [x_{j}(m_{j}); y_{i}(n_{i})]_{s,t} \in \mathcal{K}$
if and only if $M(\mathbf{v}_{k}) = N(\mathbf{v}_{k})$
for each $k \in [l]$ and no $x_{j}$ or $y_{i}$ belongs to $X \setminus (B_{1} \cup \cdots \cup B_{l})$.
Now consider a fixed $\boldsymbol{\omega} = (\omega_{r}) = [x_{j}(m_{j}); y_{i}(n_{i})]_{s,t} \in \mathcal{K}$.
For each $k \in [l]$ we form a new vector $\boldsymbol{\omega}_{k}$ from $\boldsymbol{\omega}$ in the following way:
if $z_{r} \notin B_{k}$ redefine $\omega_{r}$ to be $0$. Otherwise, make no change to $\omega_{r}$.
While it may not be the case that $\boldsymbol{\omega}_{k} \in \mathcal{N}$ we can still define
$\gamma_{1}(\boldsymbol{\omega}_{k})$ according to the formula given in Definition \ref{S4}.
Now $2 \cdot \gamma_{1}(\boldsymbol{\omega}) = m_{1}^{2} + \cdots + m_{s}^{2} + n_{1}^{2} + \cdots + n_{t}^{2}$
by the second part of Theorem \ref{thm:4}. Moreover, for each $k \in [l]$, the argument used to prove Lemma \ref{lem:1} may be
easily modified to show that
\begin{eqnarray*}
2 \cdot \gamma_{1}(\boldsymbol{\omega}_{k}) & = & \sum\limits_{j: x_{j} \in B_{k}} m_{j}^{2}
+ \sum\limits_{i: y_{i} \in B_{k}} n_{i}^{2}.
\end{eqnarray*}
Thus $\gamma_{1}(\boldsymbol{\omega}) = \gamma_{1}(\boldsymbol{\omega}_{1}) + \cdots + \gamma_{1}(\boldsymbol{\omega}_{l})$.
Provided $w_{k} \defeq M(\mathbf{v}_{k}) = N(\mathbf{v}_{k}) > 0$,
we may further set $\boldsymbol{\upsilon}_{k} \defeq \boldsymbol{\omega}_{k} / w_{k}$.
(In the event that $w_{k} = 0$ it suffices to let $\boldsymbol{\upsilon}_{k}$ be any simplex in $B_{k}$.)
Then $\boldsymbol{\upsilon}_{k}$ is a simplex in $B_{k}$, $\boldsymbol{\upsilon}_{k} \in \mathcal{K}$ and
$\gamma_{1}(\boldsymbol{\omega}_{k}) = w_{k}^{2} \gamma_{1}(\boldsymbol{\upsilon}_{k})$ by construction.
Thus, $\gamma_{1}(\boldsymbol{\omega}) = w_{1}^{2} \gamma_{1}(\boldsymbol{\upsilon}_{1}) + \cdots +
w_{l}^{2} \gamma_{1}(\boldsymbol{\upsilon}_{l})$.

By Theorem \ref{p=1},
\begin{eqnarray*}
\Gamma_{X}(\infty)
& = & \inf\limits_{\boldsymbol{\omega} \in \mathcal{K}} \gamma_{1}(\boldsymbol{\omega}) \\
& = & \inf\limits_{\boldsymbol{\omega} \in \mathcal{K}}
\gamma_{1}(\boldsymbol{\omega}_{1}) + \cdots + \gamma_{1}(\boldsymbol{\omega}_{l}) \\
& = & \inf\limits_{\boldsymbol{\omega} \in \mathcal{K}} w_{1}^{2} \gamma_{1}(\boldsymbol{\upsilon}_{1}) + \cdots +
w_{l}^{2} \gamma_{1}(\boldsymbol{\upsilon}_{l}) \\
& = & \inf \left\{ w_{1}^{2} \gamma_{1}(\boldsymbol{\varsigma}_{1}) + \cdots + w_{l}^{2} \gamma_{1}(\boldsymbol{\varsigma}_{l}) :
\boldsymbol{\varsigma}_{k} \text{ is a simplex in } B_{k} \text{ and } w_{1} + \cdots + w_{l} = 1 \right\} \\
& = & \inf \left\{ w_{1}^{2} \Gamma_{B_{1}}(1) + \cdots + w_{l}^{2} \Gamma_{B_{l}}(1) : w_{1} + \cdots + w_{l} = 1 \right\} \\
& = & \inf \left\{ w_{1}^{2} \vartheta(|B_{1}|) + \cdots + w_{l}^{2} \vartheta(|B_{l}|) : w_{1} + \cdots + w_{l} = 1 \right\} \\
& = & \left\{ \vartheta (|B_{1}|)^{-1} + \cdots + \vartheta (|B_{l}|)^{-1} \right\}^{-1}.
\end{eqnarray*}
The last equality follows from an application of Lagrange's multiplier theorem.
\end{proof}

Theorem \ref{main} and Corollary \ref{main:cor} now follow directly from (\ref{Y}) and Theorem \ref{X}.

\begin{remark}\label{ian:ex}
We conclude this section with a brief discussion of a specific example.
Even in the case of a finite ultrametric space $(X,d)$ with relatively few points and distinct non-zero
distances, determining an explicit expression for $\Gamma_{X}(p)$ can be a surprisingly difficult task.
However, in small spaces with just two non-zero distances one can often explicitly calculate $\Gamma_{X}(p)$.

Consider, for example, the ultrametric $d$ induced on the set $X = \{ z_{1}, z_{2}, z_{3}, z_{4}, z_{5}, z_{6} \}$
by the dendrogram with proximity part $\{ 0 = \alpha_{0}, 1 = \alpha_{1}, \alpha_{2} \}$
and partition function $\pi (1) = \{ \{ z_{1} \}, \{ z_{2}, z_{3} \}, \{z_{4}, z_{5}, z_{6} \} \}$.
In this case one can calculate directly (by using Lagrange's multiplier theorem or formulas of Wolf \cite{Wol})
that for any $p \geq 0$, we have
\begin{eqnarray*}
\Gamma_{X}(p) & = & \frac{9 \alpha_{2}^{2p} - 7 \alpha_{2}^{p} + 1}{21 \alpha_{2}^{2p} - 12 \alpha_{2}^{p}}.
\end{eqnarray*}
Taking the limit as $p \rightarrow \infty$, we see that
\[
\Gamma_{X}(\infty)  =  \frac{3}{7} = (\vartheta(2)^{-1} + \vartheta(3)^{-1})^{-1},
\]
in agreement with Theorem \ref{X}.
Notice in this instance that we have $\Gamma_{X}(\infty) - \Gamma_{X}(p) = \mathcal{O}(\alpha_{2}^{-p})$.
In general, we do not know the rate of convergence of the limit in Theorem \ref{X}.
\end{remark}

\section{Determining when $\Gamma_{X}(p)$ is constant on $[0, \infty)$}\label{sec:6}

Let $(X,d)$ be a finite ultrametric space with $|X| > 1$ and minimum non-zero distance $\alpha_{1} = 1$.
In this section we determine when the $p$-negative type gap $\Gamma_{X}(p)$ is constant on
$[0, \infty)$. It is helpful to note some basic properties of the function $\vartheta (n)$ (see (\ref{theta})).
If $n > 1$ is even, then $\vartheta (n) = 2/n$, and so $\vartheta (n)^{-1} - n/2 = 0$.
On the other hand, if $n > 1$ is odd, then $\vartheta (n) = 2n/(n^{2} - 1)$, and so $\vartheta (n)^{-1} - n/2 = -(1/2n)$.

\begin{corollary}\label{constant}
Let $l \geq 1$ be an integer. If $n \defeq n_{1} + \cdots + n_{l}$ where $n_{1}, \ldots, n_{l} > 1$ are integers, then
\begin{eqnarray}\label{ineq:one}
\vartheta (n) & \leq & \left\{ \vartheta (n_{1})^{-1} + \cdots + \vartheta (n_{l})^{-1} \right\}^{-1}.
\end{eqnarray}
Moreover, we have equality if and only if $l = 1$ or $l > 1$ and all of the integers $n_{1}, \ldots, n_{l}$ are even.
\end{corollary}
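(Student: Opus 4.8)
The plan is to reduce the inequality (\ref{ineq:one}) to an elementary estimate about the quantity $\vartheta(n)^{-1}$, using the explicit formulas for $\vartheta$ recorded just before the corollary. Recall that $\vartheta(n)^{-1} = n/2$ when $n$ is even and $\vartheta(n)^{-1} = n/2 - 1/(2n)$ when $n$ is odd. In either case $\vartheta(n)^{-1} = n/2 - \varepsilon(n)$ where $\varepsilon(n) = 0$ if $n$ is even and $\varepsilon(n) = 1/(2n)$ if $n$ is odd; note $0 \le \varepsilon(n) < 1/2$ for all $n \ge 2$, and $\varepsilon(n) = 0$ exactly when $n$ is even. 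Since (\ref{ineq:one}) with $n = n_1 + \cdots + n_l$ is equivalent (after inverting both sides, both of which are positive) to
\[
\vartheta(n_1)^{-1} + \cdots + \vartheta(n_l)^{-1} \;\le\; \vartheta(n)^{-1},
\]
it suffices to prove
\[
\sum_{k=1}^{l}\left( \frac{n_k}{2} - \varepsilon(n_k) \right) \;\le\; \frac{n}{2} - \varepsilon(n),
\]
i.e., $\varepsilon(n) \le \sum_{k=1}^{l} \varepsilon(n_k)$, because the $n_k/2$ terms sum exactly to $n/2$.

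First I would handle the case $l = 1$ separately: then $n = n_1$ and (\ref{ineq:one}) is a trivial equality, so assume $l \ge 2$ from now on. Next I would split into the two cases according to the parity of $n$. If $n$ is even, then $\varepsilon(n) = 0$, so $\varepsilon(n) \le \sum_k \varepsilon(n_k)$ holds automatically since each $\varepsilon(n_k) \ge 0$; moreover equality forces $\varepsilon(n_k) = 0$ for every $k$, i.e.\ every $n_k$ even. Conversely if every $n_k$ is even then both sides are $0$ and we have equality. If $n$ is odd, then at least one $n_k$ is odd (an even number of odd summands would make $n$ even, but in any case there is at least one odd $n_k$), so pick such an index $k_0$; since each $n_k \le n$ we get $\varepsilon(n_{k_0}) = 1/(2n_{k_0}) \ge 1/(2n) = \varepsilon(n)$, hence $\varepsilon(n) \le \varepsilon(n_{k_0}) \le \sum_k \varepsilon(n_k)$, and this inequality is strict as soon as there is a second odd $n_k$ or $n_{k_0} < n$ — but $n_{k_0} < n$ always holds here because $l \ge 2$ and every $n_k > 1$, so in the odd case the inequality (\ref{ineq:one}) is strict and equality never occurs.

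Assembling these cases gives both the inequality and the stated equality characterization: equality holds iff $l = 1$, or $l > 1$ with $n$ even and all $n_k$ even (the odd-$n$ subcase never yielding equality when $l \ge 2$). I do not anticipate a serious obstacle here — the only mild subtlety is bookkeeping the equality condition carefully in the even-$n$ case (making sure "$\varepsilon(n_k) = 0$ for all $k$" is correctly translated as "all $n_k$ even") and confirming that the odd-$n$ case with $l \ge 2$ is genuinely strict, which follows from $n_{k_0} \le n - n_{k'} < n$ for any other index $k' \ne k_0$ together with $n_{k'} > 1$. One should also double-check the reduction step that inverting the inequality (\ref{ineq:one}) is legitimate: both sides are positive since each $\vartheta(n_k) > 0$, so this is fine.
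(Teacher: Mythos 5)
Your proof is correct, and for the inequality itself it takes a genuinely different (and more self-contained) route than the paper. The paper obtains (\ref{ineq:one}) by invoking its analytic machinery: it realizes $n_{1},\ldots,n_{l}$ as the coterie sizes of an ultrametric space $(X,d)$ with $|X|=n$ and reads off $\vartheta(n)=\Gamma_{X}(0)\leq\Gamma_{X}(\infty)=\{\vartheta(n_{1})^{-1}+\cdots+\vartheta(n_{l})^{-1}\}^{-1}$ from Corollary \ref{l:bounds} and Theorem \ref{X}; only the equality analysis is done by hand, via the reformulation
\begin{equation*}
\vartheta(n)^{-1}-\tfrac{n}{2}\ \geq\ \sum_{j=1}^{l}\Bigl(\vartheta(n_{j})^{-1}-\tfrac{n_{j}}{2}\Bigr),
\end{equation*}
which is exactly your reduction to $\varepsilon(n)\leq\sum_{k}\varepsilon(n_{k})$ with the signs flipped, since $\vartheta(m)^{-1}-m/2=-\varepsilon(m)$. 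You instead push that same parity bookkeeping all the way through, so both the inequality and the equality characterization follow from the elementary observation that $\varepsilon$ vanishes on even integers, equals $1/(2m)$ on odd integers, and that an odd $n$ forces some odd $n_{k_{0}}<n$ (hence $\varepsilon(n_{k_{0}})>\varepsilon(n)$, ruling out equality). What your approach buys is independence from Theorems \ref{X} and \ref{l:bounds}, making the corollary a free-standing combinatorial fact; what the paper's approach buys is economy, since the hard analytic work has already been done and the corollary is then deployed immediately afterwards to prove Theorem \ref{con:thm}. Both arguments are sound, and your handling of the equality cases (all $n_{k}$ even when $n$ is even; strictness when $n$ is odd and $l\geq 2$) matches the paper's.
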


\begin{proof}
Inequality (\ref{ineq:one}) follows from the calculations in this paper. Indeed, given integers
$n_{1}, \ldots, n_{l} > 1$, consider any finite ultrametric space $(X,d)$
with minimum non-zero distance $\alpha_{1} = 1$ and distinct coteries $B_{1}, \ldots, B_{l}$ such that
$n = |X| = |B_{1}| + \cdots + |B_{l}|$ and $n_{j} = |B_{j}|$ for each $j \in [l]$. Then,
by Corollary \ref{l:bounds} and Theorem \ref{X}, we see that $\vartheta (n) = \Gamma_{X}(0) \leq \Gamma_{X}(\infty) =
\left\{ \vartheta (n_{1})^{-1} + \cdots + \vartheta (n_{l})^{-1} \right\}^{-1}.$

If $l = 1$ the inequality (\ref{ineq:one}) is, trivially, an equality. Suppose that $l > 1$.
Then we have $1 < n_{j} < n$ for each $j \in [l]$. Notice that the inequality
\begin{eqnarray}\label{ineq:two}
\vartheta (n)^{-1} - \frac{n}{2}
& \geq & \sum\limits_{j=1}^{l} \left( \vartheta (n_{j})^{-1} - \frac{n_{j}}{2} \right)
\end{eqnarray}
is an equivalent formulation of (\ref{ineq:one}) because $n = n_{1} + \cdots + n_{l}$. The parenthetical
terms on the right side of (\ref{ineq:two}) are $0$ (if $n_{j}$ is even) or negative (if $n_{j}$ is odd).
The right side of (\ref{ineq:two}) is therefore $0$ or negative. We now distinguish between two cases.

If $n$ is even, $\vartheta (n)^{-1} - n/2 = 0$. On the other hand, right side of (\ref{ineq:two})
is negative if and only if at least one $n_{j}$ is odd. So, in this case, we have equality in the
inequality (\ref{ineq:one}) if and only if each $n_{j}$ is even.

If $n$ is odd, $\vartheta (n)^{-1} - n/2 = -(1/2n)$, and at least one $n_{j}$ is odd. So, in this
case, we have
\[
\sum\limits_{j=1}^{l} \left( \vartheta (n_{j})^{-1} - \frac{n_{j}}{2} \right) =
\sum\limits_{j : n_{j} \text{ odd}} \frac{-1}{2n_{j}} < \frac{-1}{2n},
\]
because $n_{j} < n$ for each $j \in [l]$. This means that equality is not possible
in inequality (\ref{ineq:one}) if $n$ is odd.

The statement concerning the case of equality in inequality (\ref{ineq:one}) is now clear.
\end{proof}

It is important to note that if $d$ is the discrete metric on $X$,
then we have $\Gamma_{X}(p) = \Gamma_{X}(0) = \vartheta (|X|)$ for all $p \geq 0$. This follows from
Weston \cite[Theorem 3.2]{We2}. If we assume that $d$ is not the discrete metric on $X$ (so that
$\alpha_{1} = 1$ is not the only non-zero distance in $(X,d)$),
then $(X,d)$ will have at least two coteries or we will have $|X| < |B_{1}| + \cdots |B_{l}|$ where
$B_{1}, B_{2}, \ldots, B_{l}$ are the distinct coteries of $(X,d)$. However, if it is the case that
$|X| < |B_{1}| + \cdots |B_{l}|$, then $\Gamma_{X}(0) = \vartheta (|X|) < \Gamma_{X}(\infty)$.
This is because $\vartheta (n)$ decreases strictly as $n$ increases and
because $\Gamma_{X}(\infty) \geq \vartheta (|B_{1}| + \cdots |B_{l}|)$ by Corollary \ref{constant}.
These comments support the proof of the main result of this section.

\begin{theorem}\label{con:thm}
If $B_{1}, B_{2}, \ldots, B_{l}$ are the distinct coteries of $(X,d)$ and $d$ is not
the discrete metric on $X$, then the following conditions are equivalent:
\begin{enumerate}
\item $\Gamma_{X}(0) = \Gamma_{X}(\infty)$

\item $\Gamma_{X}(p)$ is constant on $[0, \infty)$

\item $|X| = |B_{1}| + \cdots |B_{l}|$ and all of the integers $|B_{1}|, \ldots, |B_{l}|$ are even.
\end{enumerate}
\end{theorem}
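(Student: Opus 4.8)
The plan is to reduce the whole equivalence to a single numerical identity and then read it off from results already established. First I would note that conditions (1) and (2) are equivalent essentially for free: by Theorem \ref{limit:2} the map $p \mapsto \Gamma_{X}(p)$ is non-decreasing on $[0,\infty)$ with finite limit $\Gamma_{X}(\infty)$ at infinity, so $\Gamma_{X}(0) \le \Gamma_{X}(p) \le \Gamma_{X}(\infty)$ for every $p \ge 0$, and therefore $\Gamma_{X}(p)$ is constant on $[0,\infty)$ if and only if $\Gamma_{X}(0) = \Gamma_{X}(\infty)$. It then remains to prove (1) $\Leftrightarrow$ (3). For this I would recall the two explicit formulas in play: $\Gamma_{X}(0) = \vartheta(|X|)$ (a consequence of Weston \cite[Theorem 3.2]{We2}, the $0$-negative type gap depending only on the cardinality of $X$) and $\Gamma_{X}(\infty) = \{\vartheta(|B_{1}|)^{-1} + \cdots + \vartheta(|B_{l}|)^{-1}\}^{-1}$ (Theorem \ref{X}). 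Set $n \defeq |B_{1}| + \cdots + |B_{l}|$; since the coteries are pairwise disjoint subsets of $X$, each of cardinality at least $2$, we have $2 \le n \le |X|$.

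For the implication (1) $\Rightarrow$ (3), I would start from $\vartheta(|X|) = \Gamma_{X}(0) = \Gamma_{X}(\infty) = \{\vartheta(|B_{1}|)^{-1} + \cdots + \vartheta(|B_{l}|)^{-1}\}^{-1}$. Applying inequality (\ref{ineq:one}) of Corollary \ref{constant} to the integers $|B_{1}|, \ldots, |B_{l}|$ gives $\vartheta(n) \le \Gamma_{X}(\infty) = \vartheta(|X|)$; because $\vartheta$ is strictly decreasing and $|X| \ge n$, this is only possible when $|X| = n$, which is the first half of (3). Once $|X| = n$ is known, the hypothesis $\Gamma_{X}(0) = \Gamma_{X}(\infty)$ becomes exactly the equality case in (\ref{ineq:one}), so Corollary \ref{constant} forces either $l = 1$ or $l > 1$ with all of $|B_{1}|, \ldots, |B_{l}|$ even. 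I would then exclude $l = 1$: in that case $X = B_{1}$ is itself a coterie, whence every pair of distinct points lies within distance $\alpha_{1} = 1$ by the ultrametric inequality, forcing $d$ to be the discrete metric and contradicting the standing hypothesis. Hence all $|B_{j}|$ are even, giving (3).

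The implication (3) $\Rightarrow$ (1) is the short direction. Assuming $|X| = |B_{1}| + \cdots + |B_{l}|$ with every $|B_{j}|$ even, the equality clause of Corollary \ref{constant} yields $\vartheta(|B_{1}| + \cdots + |B_{l}|) = \{\vartheta(|B_{1}|)^{-1} + \cdots + \vartheta(|B_{l}|)^{-1}\}^{-1}$; combining this with $|X| = |B_{1}| + \cdots + |B_{l}|$, Weston's formula and Theorem \ref{X} gives $\Gamma_{X}(0) = \vartheta(|X|) = \Gamma_{X}(\infty)$, which is (1).

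I do not expect a genuine obstacle here: the substantive work is already contained in Theorem \ref{X} and in Corollary \ref{constant}. The points needing attention are purely organisational --- checking $n \ge 2$ so that Corollary \ref{constant} applies, using strict monotonicity of $\vartheta$ to upgrade $|X| \ge n$ to $|X| = n$, and ruling out the degenerate one-coterie case via the assumption $d \ne \alpha_{1}\cdot\rho$. If anything is delicate, it is making sure the ``$d$ not discrete'' hypothesis is invoked in exactly the right place, namely only to kill $l = 1$ in the forward direction.
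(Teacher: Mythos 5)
Your proof is correct and follows essentially the same route as the paper: both reduce (1) $\Leftrightarrow$ (2) to the monotonicity of $\Gamma_{X}(p)$ on $[0,\infty)$, and both reduce (1) $\Leftrightarrow$ (3) to the equality case of Corollary \ref{constant} combined with the formulas $\Gamma_{X}(0)=\vartheta(|X|)$ and Theorem \ref{X}. Your explicit handling of the possibility $|X|\neq |B_{1}|+\cdots+|B_{l}|$ and your elimination of the one-coterie case via the non-discreteness hypothesis correspond exactly to the remarks the paper places immediately before the theorem to support its terse proof.
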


\begin{proof}
The equivalence of (1) and (2) is plain because $\Gamma_{X}(p)$ is non-decreasing on $[0, \infty)$.
The equivalence of (1) and (3) follows from Corollary \ref{constant}.
\end{proof}

Theorem \ref{main2} now follows directly from (\ref{Y}) and Theorem \ref{con:thm}.

\section*{Acknowledgements}
We would like to thank the CCRDS research group at the University of South Africa for their
kind support and insightful comments during the preparation of this paper. The last named author
was partially supported by the \textit{Visiting Researcher Support Programme} at the University
of South Africa and was provided additional support by the University of New South Wales.

\bibliographystyle{amsalpha}

\begin{thebibliography}{Ar1}

\bibitem{Ar1} S. Arora, S. Rao and U. Vazirani, \textit{Expander flows, geometric embeddings, and graph partitionings},
              in: 36th Annual Symposium on the Theory of Computing (2004), 222--231.

\bibitem{Ar2} S. Arora, J. R. Lee and A. Naor, \textit{Euclidean distortion and the sparsest cut},
              J. Amer. Math. Soc. \textbf{21} (2008), 1--21.

\bibitem{Ba1} Y. Bartal, N. Linial, M. Mendel and A. Naor, \textit{Low dimensional embeddings of ultrametrics},
              European J. Comb. \textbf{25} (2004) 87--92.

\bibitem{Ba2} Y. Bartal, B. Bollob\'{a}s and M. Mendel,
              \textit{Ramsey-type theorems for metric spaces with applications to online problems},
              J. Comput. Syst. Sci. \textbf{72} (2006) 890--921.

\bibitem{Car} G. Carlsson and F. M\'{e}moli, \textit{Characterization, Stability and Convergence of Hierarchical Clustering Methods},
              Journal of Machine Learning Research \textbf{11} (2010) 1425--1470.

\bibitem{Cay} A. Cayley, \textit{On a theorem in the geometry of position}, Cambridge Mathematical Journal \textbf{II} (1841),
              267--271. (Also in \textsl{The Collected Mathematical Papers of Arthur Cayley (Vol. I)}, Cambridge University Press,
              Cambridge (1889), 1--4.)

\bibitem{Cha} S. Chawla, A. Gupta and H. Racke,
              \textit{Embeddings of negative-type metrics and an improved approximation to generalized sparsest cut}, in:
              Proceedings of the 16th Annual ACM-SIAM Symposium on Discrete Algorithms, Vancouver (2005), 102--111.

\bibitem{deG} J. de Groot, \textit{Non-Archimedean metrics in topology}, Proc. Amer. Math. Soc.
              \textbf{7} (1956), 948--953.

\bibitem{Do1} I. Doust and A. Weston, \textit{Enhanced negative type for finite metric trees}, J. Funct. Anal.
              \textbf{254} (2008), 2336--2364.

\bibitem{Do2} I. Doust and A. Weston,
              \textit{Corrigendum to ``Enhanced negative type for finite metric trees''},
              J. Funct. Anal. \textbf{255} (2008), 532--533.

\bibitem{Enf} P.\ Enflo, \textit{On a problem of Smirnov}, Ark. Mat. \textbf{8} (1969), 107--109.

\bibitem{Fav} T. Faver, K. Kochalski, M. Murugan, H. Verheggen, E. Wesson and A. Weston,
              \textit{Roundness properties of ultrametric spaces}, Glasgow Math. J. \textbf{56} (2014), 519--535.

\bibitem{Hj1} P. Hjorth, P. Lison\v{e}k, S. Markvorsen and C. Thomassen,
              \textit{Finite metric spaces of strictly negative type}, Linear Algebra Appl. \textbf{270} (1998), 255--273.

\bibitem{Hj2} P. G. Hjorth, S. L. Kokkendorff and S. Markvorsen,
              \textit{Hyperbolic spaces are of strictly negative type}, Proc. Amer. Math. Soc. \textbf{130} (2002), 175--181.

\bibitem{Hu1} B. Hughes, \textit{Trees and ultrametric spaces: a categorical equivalence},
              Adv. Math. \textbf{189} (2004), 148--191.

\bibitem{Hu2} B. Hughes, \textit{Trees, ultrametrics, and noncommutative geometry},
              Pure and Applied Mathematics Quarterly \textbf{8} (2012), 221--312.

\bibitem{Koz} S. V. Kozyrev and A. Yu. Khrennikov, \textit{Localization in space for a free particle in ultrametric quantum mechanics},
              Doklady Akademii Nauk. \textbf{411} (2006), 319--322.

\bibitem{Ltw} C. J. Lennard, A. M. Tonge and A. Weston, \textit{Generalized roundness and negative type}, Michigan Math. J.
              \textbf{44} (1997), 37--45.

\bibitem{Hli} H. Li and A. Weston, \textit{Strict p-negative type of a metric space}, Positivity \textbf{14} (2010), 529--545.

\bibitem{Mur} F. Murtagh, \textit{Ultrametric and generalized ultrametric in computational logic and in data analysis},
              arXiv:1008.3585v1 [cs.LO].

\bibitem{Pra} E. Prassidis and A. Weston,
              \textit{Manifestations of non linear roundness in analysis, discrete geometry and topology}, in:
              Limits of Graphs in Group Theory and Computer Science,
              Research Proceedings of the \'{E}cole Polytechnique F\'{e}d\'{e}rale de Lausanne,
              CRC Press (2009), 141--170.

\bibitem{Sc1} I. J. Schoenberg, \textit{Remarks to Maurice Frechet's article ``Sur la d\'{e}finition axiomatique
              d'une classe d'espaces distanci\'{e}s vectoriellement applicable sur l'espace de Hilbert.''},
              Ann. Math. \textbf{36} (1935), 724--732.

\bibitem{We1} A. Weston, \textit{On the generalized roundness of finite metric spaces}, J. Math. Anal. Appl.
              \textbf{192} (1995), 323--334.

\bibitem{We2} A. Weston, \textit{Optimal lower bound on the supremal strict $p$-negative type of a finite metric space},
              Bull. Aust. Math. Soc. \textbf{80} (2009), 486--497.

\bibitem{Wol} R. Wolf, \textit{On the gap of finite metric spaces of $p$-negative type},
              Lin. Alg. Appl. \textbf{436} (2012), 1246--1257.

\end{thebibliography}

\end{document}